  \def\Xint#1{\mathchoice
    {\XXint\displaystyle\textstyle{#1}}%
    {\XXint\textstyle\scriptstyle{#1}}%
    {\XXint\scriptstyle\scriptscriptstyle{#1}}%
    {\XXint\scriptscriptstyle\scriptscriptstyle{#1}}%
    \!\int}
    \def\XXint#1#2#3{{\setbox0=\hbox{$#1{#2#3}{\int}$ }
    \vcenter{\hbox{$#2#3$ }}\kern-.6\wd0}}
    \def\dashint{\Xint-}
\newcommand{\Besov}{{B^{\theta}_{p, p}(\partial X)}}
\newcommand{\dBesov}{{\dot{ B}^{\theta}_{p}(\partial X)}}
\newcommand{\obesov}{{\mathcal B^{\theta,\lambda_2}_{\Phi}(\partial X)}}
\newcommand{\dobesov}{{\dot{\mathcal B}^{\theta, \lambda_2}_{\Phi}(\partial X)}}
\newcommand{\dobesove}{{\dot{\mathcal B}^{\theta, \lambda_2}_{\Phi}}}
\newcommand{\besov}{{\mathcal B^{\theta}_{p}(\partial X)}}
\newcommand{\lbesov}{{\mathcal B^{\theta, \lambda}_{p}(\partial X)}}
\newcommand{\dbesov}{{\dot{\mathcal B}^{\theta}_{p}(\partial X)}}
\newcommand{\dbesove}{{\dot{\mathcal B}^{\theta}_{p}}}
\newcommand{\dlbesov}{{\dot{\mathcal B}^{\theta, \lambda}_{p}(\partial X)}}
\newcommand{\dlbesove}{{\dot{\mathcal B}^{\theta, \lambda}_{p}}}
\newcommand{\lpb}{{L^p(\partial X)}}
\newcommand{\bx}{{{\partial X}}}
\newcommand{\mul}{{{\mu_\lambda}}}
\newcommand{\mull}{{{\mu_{\lambda_2}}}}
\newcommand{\Tr}{{\rm Tr}\,}
\newcommand{\Id}{{\rm Id}\,}
\newcommand{\N}{{\mathbb N}}
\newcommand{\dyadic}{\mathscr{Q}}
\newcommand{\diam}{\text{\rm\,diam}}
\newcommand{\real}{{\mathbb R}}
\newcommand{\rarrow}{\rightarrow}
\newtheorem{thm}{Theorem}[section]
\newtheorem{lem}[thm]{Lemma}
\newtheorem{prop}[thm]{Proposition}
\newtheorem{cor}[thm]{Corollary}
\newtheorem{defn}[thm]{Definition}
\numberwithin{equation}{section}
\begin{document}
\title{\Large\bf Characterization of trace spaces on regular trees\\ via dyadic norms
\footnotetext{\hspace{-0.35cm}
$2010$ {\it Mathematics Subject classfication}: 46E35, 30L05
\endgraf{{\it Key words and phases}: regular tree, trace space, dyadic norm, Orlicz-Sobolev space}
 \endgraf{The author has been supported by the Academy of Finland grant 323960.}
}}
\author{Zhuang Wang}
\date{ }
\maketitle
\begin{abstract}
In this paper, we study the traces of Orlicz-Sobolev spaces on a regular rooted tree. 
After giving a dyadic decomposition of the boundary of the regular tree, we present a characterization on the trace spaces of those first order Orlicz-Sobolev spaces whose Young function is of the form $t^p\log^\lambda(e+t)$, based on integral averages on dyadic elements of the dyadic decomposition. 
 
\end{abstract}

\section{Introduction}
The problem of the  characterization of the trace spaces (on the boundary of a domain) of Sobolev spaces has a long history. It was first studied in the Euclidean setting by Gagliardo \cite{Ga}, who proved that the trace operator $T: W^{1, p}(\real^{n+1}_+)\rarrow B^{1-1/p}_{p,p}(\real^n)$, where $B^{1-1/p}_{p,p}(\real^n)$ stands for the classical Besov space, is linear and bounded for every $p>1$ and that there exists a bounded linear extension operator that acts as a right inverse of $T$. Moreover, he proved that the trace operator $T: W^{1, 1}(\real^{n+1}_+)\rarrow L^1(\real^n)$ is a bounded linear surjective operator with a non-linear right inverse.  Peetre \cite{Pe} showed that one can not find a bounded linear extension operator  that acts as a right inverse of $T: W^{1, 1}(\real^{n+1}_+)\rarrow L^1(\real^n)$. We refer to the seminal monographs by Peetre \cite{Pe79} and Triebel \cite{T83,T01} for extensive treatments of the Besov spaces and related smoothness spaces.
 In potential theory, certain types of Dirichlet problem are guaranteed to have solutions when the boundary data belongs to a trace space corresponding to the Sobolev class on the domain. In the Euclidean setting, we refer to \cite{Ar,Li,MirRus,SlBa,Tyu1,Tyu2} for more information on the traces of (weighted) Sobolev spaces and \cite{DhKa,DhKa2,Fou,Lac72,Pal79,C10,Lac,Pal}  for results on traces of (weighted) Orlicz-Sobolev spaces. 

Analysis on metric measure spaces has recently been under active study, e.g., \cite{BB11,BBS03,H03,HP,H01,HK98,HKST}. Especially the trace theory in the metric setting has been under development.  Mal\'y \cite{Ma} proved that the trace space of the Newtonian space $N^{1, p}(\Omega)$ is the Besov space $B^{1-\theta/p}_{p, p}(\partial\Omega)$ provided that $\Omega$ is a John domain for $p>1$ (uniform domain for $p\geq 1$) that admits a $p$-Poincar\'e inequality and whose boundary $\partial \Omega$ is endowed with a codimensional-$\theta$ Ahlfors regular measure with $\theta<p$. We also refer to the paper \cite{SS} for studies on the traces of  Haj\l asz-Sobolev functions to porous Ahlfors regular closed subsets via a method  based on hyperbolic fillings of a metric space, see \cite{BS18,So}.   For the trace result of BV (bounded variation) functions, we refer to \cite{LS18,MSS,LXZ19}.

The recent paper \cite{BBGS} dealt with  geometric analysis on Cantor-type sets which are uniformly perfect totally disconnected metric measure spaces, including various types of Cantor sets. Cantor sets embedded in Euclidean spaces support a fractional Sobolev space theory based on Besov spaces. Indeed, suitable Besov functions on such a set are traces of the classical Sobolev functions on the ambient Euclidean spaces, see Jonsson-Wallin \cite{JoWa80,JoWa}. The paper \cite{BBGS,KW} established similar trace and extension theorems for Sobolev and Besov spaces on regular trees and their Cantor-type boundaries.  Indeed, for a regular $K$-ary tree $X$ with $K\geq 2$ and its Cantor-type boundary $\bx$, if we give the uniformizing metric (see \eqref{metric})
$$d_X(x, y)=\int_{[x, y]} e^{-\epsilon|z|}\, d\,|z|$$
and the weighted measure (see \eqref{def-measure} )
\begin{equation}\label{def-measure0}
d\mul(x)=e^{-\beta|x|}(|x|+C)^\lambda\, d\,|x|
\end{equation}
on $X$, then the Besov space $\lbesov$ in Definition \ref{besov-3} below is exactly the trace of the Newton-Sobolev space $N^{1, p}(X, \mul)$ defined in Section \ref{Newtonian-space}, see \cite[Theorem 1.1]{KW} and \cite[Theorem 6.5]{BBGS}. Here the smoothness exponent of the Besov space is
$$\theta=1-\frac{\beta/\epsilon-Q}{p},\ \ 0<\theta<1,$$
where $Q=\log K/\epsilon$  is the Hausdorff dimension of the Cantor-type boundary and $\beta/\epsilon-Q$ is a ``codimension'' determined by the uniformizing metric $d_X$ and the measure $\mu$ on the tree.

In Euclidean spaces, the classical Besov norm is equivalent to a dyadic norm, and the trace spaces of the Sobolev spaces can be characterized by the Besov spaces defined via dyadic norms,  see e.g. \cite[Theorem 1.1]{KTW}. Inspired by this, we give a dyadic decomposition of the boundary $\bx$ and define a Besov space $\besov$ on the boundary $\bx$ by using a dyadic norm, see Section \ref{s-besov} and Definition \ref{besov-1}. We show in Proposition \ref{norm-equiv} that the dyadic Besov spaces $\besov$ coincide with the Besov space $B^{\theta}_{p, p}(\bx)$ and the Haj\l asz-Besov space $N^{\theta}_{p, p}(\bx)$, see Definition \ref{besov-0} and Definition \ref{H-besov} for definitions of $B^{\theta}_{p, p}(\bx)$ and $N^{\theta}_{p, p}(\bx)$. We refer to \cite{BBGS,GKZ11,K19,GKS10,KYZ10,KYZ11} for more information about
Besov spaces $B^{\theta}_{p, p}(\cdot)$ and Haj\l asz-Besov spaces $N^{\theta}_{p, p}(\cdot)$ on metric measure spaces.

By relying on dyadic norms,  we define the Orlicz-Besov space $\obesov$, $\lambda_2\in \mathbb R$ for the Young function  $\Phi(t)=t^p\log^{\lambda_1}(e+t)$ with $p >1, \lambda_1\in\real$ or $p=1, \lambda_1\geq 0$, see Definition \ref{besov-2}. Our first result shows that the Orlicz-Besov space $\obesov$ is the trace space of the Orlicz-Sobolev space $N^{1, \Phi}(X, \mu_{\lambda_2})$ defined in  Section \ref{Newtonian-space}.

\begin{thm}\label{th1}
Let $X$ be a $K$-ary tree with $K\geq 2$ and let $\Phi(t)=t^p\log^{\lambda_1}(e+t)$ with $p >1, \lambda_1\in\real$ or  $p=1, \lambda_1\geq 0$. Fix $\lambda_2\in \real$ and let $\mu_{\lambda_2}$ be the weighted measure given by \eqref{def-measure0}.  Assume that  $p>(\beta-\log K)/\epsilon>0$. Then the trace space of $N^{1, \Phi}(X, \mu_{\lambda_2})$ is the space $\obesov$ where $\theta=1-(\beta-\log K)/\epsilon p$.
\end{thm}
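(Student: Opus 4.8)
The plan is to prove, broadly following the scheme of \cite{KW,BBGS} (which treats the case $\Phi(t)=t^p$), the two halves of the asserted identity: (i) a trace operator $T\colon N^{1,\Phi}(X,\mu_{\lambda_2})\to\obesov$ is well defined and bounded; (ii) there is a bounded extension operator $E\colon\obesov\to N^{1,\Phi}(X,\mu_{\lambda_2})$ (the averaging operator, which is linear) with $T\circ E=\Id$ on $\obesov$. Since (ii) forces $T$ to be onto while (i) puts the image of $T$ inside $\obesov$, together they identify the trace space of $N^{1,\Phi}(X,\mu_{\lambda_2})$ with $\obesov$.

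\emph{Defining the trace.} Fix $u\in N^{1,\Phi}(X,\mu_{\lambda_2})$, choose a representative that is absolutely continuous on $\mu_{\lambda_2}$-a.e.\ geodesic ray, and let $g_u$ be a $\Phi$-weak upper gradient with $\|g_u\|_{L^\Phi(X,\mu_{\lambda_2})}\lesssim\|u\|_{N^{1,\Phi}(X,\mu_{\lambda_2})}$. For $\xi\in\bx$ write $v_n=v_n(\xi)$ for the level-$n$ vertex of the geodesic ray from the root to $\xi$, let $e_v$ be the edge joining $v$ to its parent $v^-$, and let $B(v)$ be a ball about $v$ of radius comparable to the $d_X$-length $e^{-\epsilon|v|}$ of $e_v$. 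A $\Phi$-Poincar\'e inequality on the tree yields the telescoping estimate
\begin{equation*}
\big|u_{B(v_n)}-u_{B(v_{n+1})}\big|\;\lesssim\;e^{-\epsilon n}\,\dashint_{B(v_n)\cup B(v_{n+1})}g_u\,d\mu_{\lambda_2}.
\end{equation*}
A maximal-function/Borel--Cantelli argument, run as in \cite{KW,BBGS}, then shows that \emph{precisely because} $p>(\beta-\log K)/\epsilon>0$, equivalently $\theta>0$, the series $\sum_n\big|u_{B(v_n)}-u_{B(v_{n+1})}\big|$ converges for $\mu^{\bx}$-a.e.\ $\xi$; hence $Tu(\xi):=\lim_n u_{B(v_n(\xi))}=\lim_{X\ni x\to\xi}u(x)$ exists $\mu^{\bx}$-a.e., is $\mu^{\bx}$-measurable, and is independent of the representative modulo $\mu^{\bx}$-null sets.

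\emph{Boundedness of $T$ and $E$, and $T\circ E=\Id$.} By Definition~\ref{besov-2}, $\|f\|_{\obesov}$ is comparable to the Luxemburg norm attached to a modular of the schematic form $\sum_v(|v|+C)^{\lambda_2}\,\mu^{\bx}(I_v)\,\Phi\big(e^{\epsilon\theta|v|}\dashint_{I_v}|f-f_{I_{v^-}}|\,d\mu^{\bx}\big)$, summed over all vertices $v$, where $I_v\subset\bx$ is the shadow of $v$ (so $\mu^{\bx}(I_v)\approx K^{-|v|}$). For $f=Tu$ one has $f_{I_v}\approx u_{B(v)}$ up to an error controlled by averages of $g_u$ over the subtree at $v$, whence $\dashint_{I_v}|f-f_{I_{v^-}}|\,d\mu^{\bx}\lesssim|u_{B(v)}-u_{B(v^-)}|+(\text{descendant tail})\lesssim e^{-\epsilon|v|}\dashint_{B(v)\cup B(v^-)}g_u\,d\mu_{\lambda_2}$. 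Dually, defining $Ef(v):=f_{I_v}$ on vertices and interpolating $d_X$-linearly along each edge, the function equal to $e^{\epsilon|v|}|Ef(v)-Ef(v^-)|$ on $e_v$ is an upper gradient of $Ef$. Inserting these into the Besov modular, respectively into $\int_X\Phi(g_{Ef})\,d\mu_{\lambda_2}$, and using $\mu_{\lambda_2}(e_v)\approx e^{-\beta|v|}(|v|+C)^{\lambda_2}$ together with the identity
\begin{equation*}
K^{-|v|}\,e^{\epsilon\theta p|v|}\;\approx\;e^{-\beta|v|}\,e^{\epsilon p|v|}\qquad\Longleftrightarrow\qquad\theta=1-\frac{\beta-\log K}{\epsilon p},
\end{equation*}
one sees that the Besov and Orlicz--Sobolev modulars match term by term (up to constants, and in the $Tu$ direction after summing the tails over descendants). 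This yields $\|Tu\|_{\obesov}\lesssim\|u\|_{N^{1,\Phi}(X,\mu_{\lambda_2})}$ and $\|Ef\|_{N^{1,\Phi}(X,\mu_{\lambda_2})}\lesssim\|f\|_{\obesov}$. Finally, $T(Ef)=f$ $\mu^{\bx}$-a.e.: the shadows $I_{v_n(\xi)}$ decrease to $\{\xi\}$ with $\mu^{\bx}(I_{v_n(\xi)})\approx K\,\mu^{\bx}(I_{v_{n+1}(\xi)})$, so the dyadic martingale (Lebesgue) differentiation theorem on $\bx$ gives $Ef(v_n(\xi))=\dashint_{I_{v_n(\xi)}}f\to f(\xi)$ for $\mu^{\bx}$-a.e.\ $\xi$, which is exactly $T(Ef)(\xi)=f(\xi)$.

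\emph{Main obstacle.} The genuinely new difficulty relative to the $L^p$ theory of \cite{KW,BBGS} is that $\Phi(t)=t^p\log^{\lambda_1}(e+t)$ is not homogeneous, so one cannot pull the dyadic scales $e^{\epsilon n}$ out of a Luxemburg norm. All of the above must instead be carried out at the level of the Orlicz modular, using the quantitative comparison $\Phi(e^{\epsilon n}t)\approx e^{\epsilon n p}\,\Phi(t)\,\big(1+n/(1+\log(e+t))\big)^{\lambda_1}$ and checking that the logarithmic factors accumulated over the $\approx e^{\epsilon n}$ many dyadic scales at level $n$ are absorbed by the weight $(|v|+C)^{\lambda_2}$ in $\obesov$ and by the $\Delta_2$-condition for $\Phi$ — which holds for $p>1$ with any $\lambda_1\in\real$, and for $p=1$ only when $\lambda_1\ge0$, the source of that hypothesis. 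A secondary delicate point is the endpoint $p=1$, where the Poincar\'e and upper-gradient estimates on the tree are more fragile and one must confirm that the linear averaging extension still does the job.
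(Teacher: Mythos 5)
Your overall architecture is the same as the paper's: the trace is the radial limit along geodesic rays, the extension averages over the shadows $I_x$ and interpolates linearly along edges, the modular of $\obesov$ is matched against $\int_X\Phi(g)\,d\mull$ using $e^{\epsilon n(\theta-1)p}\nu(I)\approx e^{-\beta n}$, and $T\circ E=\Id$ follows from Lebesgue differentiation. The extension half of your argument is essentially complete (there $g_{\tilde u}$ is constant on each edge, so the modulars really do match term by term). The gap is in the trace half. You delegate the a.e.\ existence of the limit and the modular estimate to ``a maximal-function/Borel--Cantelli argument, run as in \cite{KW,BBGS},'' but that argument does not transfer verbatim: after Jensen's inequality the telescoping sum produces, for each level $n$, a tail $\sum_{j\geq n}r_j\dashint_{[x_j,x_{j+1}]}(\cdots)\,d\mull$, and to integrate this against $\nu$ one needs a H\"older step with some exponent $q$ satisfying $q>(\beta-\log K)/\epsilon$ so that $r_{j}^{q-\kappa q-\beta/\epsilon+Q}\lesssim 1$. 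In the $t^p$ case one takes $q=p$; here $\Phi$ is not a power, and the paper's key device is to factor $\Phi=\Psi^q$ with $\Psi(t)=t^{p/q}\log^{\lambda_1/q}(e+t)$ a doubling Young function and $\max\{(\beta-\log K)/\epsilon,1\}<q<p$, apply Jensen to $\Psi$, and then H\"older with exponent $q$. Your proposal contains no substitute for this; using Jensen with $\Phi$ alone corresponds to $q=1$ and fails whenever $(\beta-\log K)/\epsilon\geq 1$, which is allowed by the hypotheses.

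The mechanism you offer instead for handling non-homogeneity is also off target. The pointwise comparison $\Phi(e^{\epsilon n}t)\approx e^{\epsilon np}\Phi(t)\bigl(1+n/(1+\log(e+t))\bigr)^{\lambda_1}$ is false for small $t$ (take $t=e^{-\epsilon n}$: the left side equals $e^{\epsilon np}\Phi(t)$ up to constants, while the right side carries an extra factor $n^{\lambda_1}$), and your ``schematic'' modular with $e^{\epsilon\theta|v|}$ inside $\Phi$ is not equivalent to Definition \ref{besov-2}: the ratio of the two integrands is $\bigl(\log(e+e^{\epsilon n}s)/\log(e+e^{\epsilon\theta n}s)\bigr)^{\lambda_1}$, which is unbounded in $n$. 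No such bookkeeping is actually needed for Theorem \ref{th1}, because the target modular $|\cdot|_{\dobesov}$ is already built from $\Phi\bigl(e^{\epsilon n}|u_I-u_{\widehat I}|\bigr)$ with weight $n^{\lambda_2}$ matching the weight $(|x|+C)^{\lambda_2}$ of $\mull$; the absorption of $\log^{\lambda_1}$ into a polynomial weight $n^{\lambda_1}$ is the content of Proposition \ref{prop1.2}, a separate statement. Finally, since both $\|\cdot\|_{\obesov}$ and $\|\cdot\|_{N^{1,\Phi}(X,\mull)}$ are Luxemburg-type norms, the modular inequalities must still be converted into norm inequalities (the paper's $2C(t_1+t_2)$ argument); this step is missing from your outline.
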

In this paper, for given Banach spaces $\mathbb X(\bx)$ and $\mathbb Y(X)$, we call  the space $\mathbb X(\bx)$  a trace space of $\mathbb Y(X)$ if and only if there exist a bounded linear operator $T: \mathbb Y(X)\rightarrow \mathbb X(\bx)$ and  a bounded linear extension operator  $E: \mathbb X(\bx)\rightarrow \mathbb Y(X)$ such that $T\circ E=\Id$ on the space $\mathbb X(\bx)$.

Our next result identifies the Orlicz-Besov space $\obesov$ as the Besov space $\lbesov$.

\begin{prop}\label{prop1.2} 
Let $\lambda, \lambda_1, \lambda_2\in \real$.
Let $\Phi(t)=t^p\log^{\lambda_1}(e+t)$ with $p >1, \lambda_1\in\real$ or  $p=1, \lambda_1\geq 0$. Assume that $\lambda_1+\lambda_2=\lambda$. Then the Banach spaces $\lbesov$ and $\obesov$ coincide, i.e., $\lbesov=\obesov$.
\end{prop}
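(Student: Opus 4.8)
The plan is to compare the two dyadic-norm descriptions directly on the same dyadic decomposition $\dyadic$ of $\bx$, reducing everything to a pointwise (i.e. per-dyadic-cube) inequality between the modular expressions and then summing. Both $\lbesov$ and $\obesov$ are defined through sums over dyadic pairs $(Q, Q')$ with $Q' $ a child of $Q$ (or over a single cube, depending on the exact form fixed in Definitions \ref{besov-2} and \ref{besov-3}) of a quantity built from the oscillation $\dashint_{Q'}|u-u_Q|$ together with a weight involving $\diam(Q)$ and the measure of $Q$. The key observation is that for the Young function $\Phi(t)=t^p\log^{\lambda_1}(e+t)$, on each dyadic cube $Q$ of generation $n=\log_K(1/\diam Q)+O(1)$ one has $\log(e+t)\approx n$ whenever $t$ is comparable to the natural scale $K^{\theta n}$ at which the summand lives; so the Orlicz modular contribution of $Q$ is comparable to $n^{\lambda_1}$ times the $L^p$-type contribution. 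The extra generation-dependent factor $n^{\lambda_1}$ is exactly what converts the weight $(n+C)^{\lambda_2}$ appearing in $\lbesov$ (coming from the measure \eqref{def-measure0}) into the weight $(n+C)^{\lambda_1+\lambda_2}=(n+C)^{\lambda}$, and this is where the hypothesis $\lambda_1+\lambda_2=\lambda$ enters.

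Concretely, first I would recall the explicit dyadic expressions for $\|u\|_{\obesov}$ and $\|u\|_{\lbesov}$ and fix notation: write $a_Q := \dashint_{Q'}|u - u_Q|\, d\mu$ (or the analogous first difference) and let $\ell(Q)\approx K^{-n}$ and $\mu(Q)\approx K^{-(\beta/\epsilon) n}(n+C)^{\lambda_2}$ denote the diameter and measure of a generation-$n$ cube. Then $\obesov$ is the modular-type space with finite $\sum_Q \mu(Q)\,\Phi\!\bigl(a_Q/\ell(Q)^{\theta}\bigr)$ (after normalization), while $\lbesov$ is the space with finite $\sum_Q (n+C)^{\lambda}\,\mu_0(Q)\,(a_Q/\ell(Q)^{\theta})^{p}$ where $\mu_0$ is the unweighted ($\lambda_2=0$) analogue. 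Second, I would prove the two-sided bound: on the ``large'' cubes where $a_Q/\ell(Q)^\theta \gtrsim 1$ one has $\Phi(a_Q/\ell(Q)^\theta)\approx n^{\lambda_1}(a_Q/\ell(Q)^\theta)^p$ directly since $\log(e+t)\approx\log t\approx n\log K$; combined with $\mu(Q)=\mu_0(Q)(n+C)^{\lambda_2}$ this gives the termwise comparison with constant uniform in $Q$. Third, I would handle the ``small'' cubes where $a_Q/\ell(Q)^\theta\lesssim 1$: here $\Phi(a_Q/\ell(Q)^\theta)\approx (a_Q/\ell(Q)^\theta)^p$ with no logarithmic gain, so the Orlicz side and the $p$-side with weight $(n+C)^{\lambda_1}$ are comparable only up to the factor $(n+C)^{\lambda_1}$; one must check that this discrepancy is harmless, which it is after one passes to homogeneous norms, or because membership in the space is unaffected by the behaviour on the regime where the summand is already summable against any polynomial weight — this is the standard trick that membership in a Besov/Orlicz-Besov space is detected by the part of the sum where the normalized differences are bounded below. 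Finally, since the argument is a termwise comparison up to multiplicative constants on one regime and a ``swallowing'' argument on the other, it yields not just equality of the underlying sets but equivalence of the (quasi-)norms, hence coincidence of the Banach spaces.

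The main obstacle I anticipate is the bookkeeping on the regime $a_Q/\ell(Q)^\theta \to 0$: there the naive termwise comparison between $\Phi(t)=t^p\log^{\lambda_1}(e+t)$ and $t^p$ with an $n$-weight is genuinely not two-sided with a uniform constant when $\lambda_1\neq 0$, so I must argue at the level of norms rather than terms. The clean way is to reduce to the homogeneous seminorms $\dobesove$ and $\dobesove$ (which are what Definitions \ref{besov-2}--\ref{besov-3} presumably isolate) where one can rescale $u$ by a constant to force $\sup_Q a_Q/\ell(Q)^\theta$ into the regime $t\gtrsim 1$, apply the clean termwise comparison there, and then add back the $L^p(\bx)$ part, on which $\Phi(t)\approx t^p$ for bounded $t$ and the weight $\lambda_1$ is absorbed using that $L^\Phi(\bx)$ with $\Phi=t^p\log^{\lambda_1}$ and $L^p(\bx)$ have the same bounded functions. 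A secondary routine point is making sure the parameter $\theta=1-(\beta-\log K)/\epsilon p$ is the same on both sides — but that is forced since both Besov scales are pinned to the same tree data via \eqref{def-measure0}, so no compatibility condition beyond $\lambda_1+\lambda_2=\lambda$ is needed.
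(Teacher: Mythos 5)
Your core idea for comparing the homogeneous parts --- split the dyadic sum according to whether the normalized difference is large or small, use $\log(e+t)\approx n$ on the large part to convert the weight $n^{\lambda_2}$ into $n^{\lambda_1+\lambda_2}=n^{\lambda}$, and argue that the small part is harmless --- is exactly the paper's Lemma \ref{lem3.1}. But two of the steps you wave at do not go through as described, and the paper needs genuinely different tools there. First, the termwise comparison is not uniform in $f$: the upper bound $\log^{\lambda_1}\bigl(e+|f_I-f_{\widehat I}|/e^{-\epsilon n}\bigr)\lesssim n^{\lambda_1}$ (for $\lambda_1>0$) requires the a priori bound $|f_I-f_{\widehat I}|\lesssim \|f\|_{L^1(\bx)}e^{(\epsilon+\log K)n}$, so the implied constant depends on $\|f\|_{L^1(\bx)}$; moreover the ``small'' part of the sum is only controlled by an \emph{additive} constant $C'$, not absorbed multiplicatively. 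Hence this argument only yields that $\|f\|_{\dlbesov}<\infty$ if and only if $|f|_{\dobesov}<\infty$, i.e.\ equality of the underlying sets, not equivalence of norms. Your proposed fix --- rescale $u$ so that the normalized differences land in the large regime --- cannot work: both seminorms are positively homogeneous, so rescaling does not change their ratio, and for any $f$ in the space the normalized differences necessarily become small along deep generations, so most terms always sit in the small regime. The paper obtains the norm equivalence non-constructively, by applying the closed graph theorem to the identity map between the two Banach spaces (Corollary \ref{cor3.3}); some step of this kind is unavoidable and is missing from your proposal.

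Second, your treatment of the zeroth-order part is incorrect: $L^{\Phi}(\bx)$ and $L^p(\bx)$ do not coincide (for $\lambda_1>0$ one has $L^{\Phi}(\bx)\subsetneq L^p(\bx)$), and ``having the same bounded functions'' does not let you exchange them inside an intersection with $\dlbesov$. The paper resolves this with a Sobolev-type embedding (Lemma \ref{lemma3.6}): any $u$ with $\|u\|_{\dlbesov}<\infty$ lies in $L^{p^*}(\bx)$ for a suitable $p^*>p$, with $\|u\|_{L^{p^*}(\bx)}\lesssim \|u\|_{L^1(\bx)}+\|u\|_{\dlbesov}$, whence $L^1(\bx)\cap\dlbesov=L^{p^*}(\bx)\cap\dlbesov$; since $\|\cdot\|_{L^1(\bx)}\lesssim\|\cdot\|_{L^{\Phi}(\bx)}\lesssim\|\cdot\|_{L^{p^*}(\bx)}$ on the finite measure space $\bx$, the $L^{\Phi}$ and $L^p$ parts can then be exchanged --- but only because of this self-improvement, not for the reason you give. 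A minor further point: your ``large'' regime $a_Q/\ell(Q)^\theta\gtrsim 1$ is too generous; to get $\log(e+t)\gtrsim n$ the threshold must grow exponentially in $n$ (the paper uses $|f_I-f_{\widehat I}|>e^{-\epsilon n(\theta+1)/2}$), and the complementary sum is summable precisely because of this choice.
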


By combining Theorem \ref{th1} and Proposition \ref{prop1.2}, we obtain the following result.

\begin{cor}\label{cor1.3}
Let $X$ be a $K$-ary tree with $K\geq 2$. Let $\lambda, \lambda_1, \lambda_2\in \real$.
 Assume that $p>(\beta-\log K)/\epsilon>0$ and let $\theta=1-(\beta-\log K)/\epsilon p$. Let $\Phi(t)=t^p\log^{\lambda_1}(e+t)$ with $p >1, \lambda_1\in\real$ or  $p=1, \lambda_1\geq 0$. Then the Besov-type space $\lbesov$ is the trace space of $N^{1, \Phi}(X, \mu_{\lambda_2})$ whenever $\lambda_1+\lambda_2=\lambda$.
\end{cor}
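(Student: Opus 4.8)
The plan is to deduce the statement directly by chaining Theorem \ref{th1} with Proposition \ref{prop1.2}; no new construction is required. First I would record precisely what Theorem \ref{th1} provides: under the standing assumption $p>(\beta-\log K)/\epsilon>0$ and with $\theta=1-(\beta-\log K)/\epsilon p$, there exist a bounded linear trace operator $T\colon N^{1,\Phi}(X,\mu_{\lambda_2})\to \obesov$ and a bounded linear extension operator $E\colon \obesov\to N^{1,\Phi}(X,\mu_{\lambda_2})$ with $T\circ E=\Id$ on $\obesov$. Note that the smoothness exponent $\theta$ appearing here is exactly the one in the statement of the corollary, so no adjustment of parameters is needed on the tree side, and the hypotheses on $\Phi$ (namely $p>1,\ \lambda_1\in\real$, or $p=1,\ \lambda_1\ge 0$) are identical to those imposed in both preceding results.

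Next I would invoke Proposition \ref{prop1.2} with the hypothesis $\lambda_1+\lambda_2=\lambda$: it asserts that $\lbesov$ and $\obesov$ are the same Banach space, i.e.\ they consist of the same functions and carry equivalent norms. Consequently the identity map $\iota\colon \obesov\to\lbesov$ is a bounded linear isomorphism with bounded inverse $\iota^{-1}\colon\lbesov\to\obesov$. Then I would simply compose: put $\widetilde T=\iota\circ T\colon N^{1,\Phi}(X,\mu_{\lambda_2})\to\lbesov$ and $\widetilde E=E\circ\iota^{-1}\colon \lbesov\to N^{1,\Phi}(X,\mu_{\lambda_2})$. Both are bounded and linear as compositions of bounded linear maps, and
$$\widetilde T\circ\widetilde E=\iota\circ (T\circ E)\circ\iota^{-1}=\iota\circ\Id\circ\iota^{-1},$$
which is the identity operator on $\lbesov$. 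By the definition of trace space given in the paragraph following Theorem \ref{th1}, this is exactly the assertion that $\lbesov$ is the trace space of $N^{1,\Phi}(X,\mu_{\lambda_2})$.

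There is essentially no genuine obstacle here; the only points that need care are bookkeeping ones. One should check that the constraints on $p$ and $\lambda_1$ in Proposition \ref{prop1.2} are compatible with those of Theorem \ref{th1} — they are, since both are phrased through the same Young function $\Phi(t)=t^p\log^{\lambda_1}(e+t)$ — and that the relation $\lambda_1+\lambda_2=\lambda$ is precisely the hypothesis under which Proposition \ref{prop1.2} identifies $\lbesov$ with $\obesov$. Finally one should use Proposition \ref{prop1.2} in its quantitative form (equivalence of norms, not mere equality of underlying sets) so that boundedness of $T$ and $E$ is preserved under the identification $\iota$; this is how the proposition is stated, so the corollary follows at once.
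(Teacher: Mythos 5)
Your proposal is correct and matches the paper's own (implicit) argument: the paper obtains Corollary \ref{cor1.3} precisely by combining Theorem \ref{th1} with Proposition \ref{prop1.2}, using the norm equivalence to transfer the trace/extension pair. Your explicit composition with the identity isomorphism $\iota$ is just a careful write-up of that same step.
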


When $\lambda_1=0$ and $\lambda_2=\lambda$, the above result coincides with \cite[Theorem 1.1]{KW}, which states that the Besov-type space $\lbesov$ is the trace space of $N^{1, p}(X, \mu_{\lambda})$ for a suitable $\theta$. The above result shows that the Besov-type space $\lbesov$ is not only the trace space of $N^{1, p}(X, \mu_{\lambda})$ but  actually the trace space of all these Orlicz-Sobolev spaces $N^{1, \Phi}(X, \mu_{\lambda_2})$ (including $N^{1, p}(X, \mu_{\lambda})$) for suitable $\theta, \lambda_2$ and $\Phi$. It may be worth to point out here that these Orlicz-Sobolev spaces $N^{1, \Phi}(X, \mu_{\lambda_2})$ are different from each other.

The paper is organized as follows. In Section \ref{s2}, we give all the necessary preliminaries. More precisely, we introduce  regular trees in Section \ref{s-regular} and we consider a doubling property of the measure $\mu$ on a regular tree $X$ and the Ahlfors regularity of  its boundary $\bx$. The definition of Young functions is given in Section \ref{s-Young}. We introduce the Newtonian and Orlicz-Sobolev spaces on $X$ and the Besov-type spaces on $\bx$ in Section \ref{Newtonian-space} and Section \ref{s-besov}, respectively. In Section \ref{proofs}, we give the  proofs of Theorem \ref{th1} and Proposition \ref{prop1.2}. 

In what follows, the letter $C$ denotes a constant that may change at different occurrences. The notation $A\approx B$ means that there is a constant $C$ such that $1/C\cdot A\leq B\leq C\cdot A$. The notation $A\lesssim B$ ($A\gtrsim B$) means that there is a constant $C$ such that  $A\leq  C\cdot B$ ($A\geq C\cdot B$).


\section{Preliminaries}\label{s2}
\subsection{Regular trees and their boundaries}\label{s-regular}
A {\it graph} $G$ is a pair $(V, E)$, where $V$ is a set of vertices and $E$ is a set of edges.  Given vertices $x, y\in V$ are neighbors if $x$ is connected to $y$ by an edge. The number of the neighbors of a vertex $x$ is referred to as the degree of $x$.  A {\it tree} $G$ is a connected graph without cycles. 

Let us fix a vertex that we refer to by $0$. The neighbors of $0$ will be called children of $0$ and $0$ is called their mother. If $x$ is one of the children of $0$, then the neighbors of $x$ different from $0$ are called children of $x$ and we say that $x$ is their mother. We continue in the obvious manner to define the children and the mother for all $y\not=0$.
We then call $G$ a rooted tree with root $0$ and say that $G$ is $K$-regular if additionally each vertex has precisely $K$ children.

Let $G$ be a $K$-regular tree with a set of vertices $V$ and  a set of edges $E$ for some $K\geq 1$. For simplicity of notation, we let $X=V\cup E$ and call it a $K$-regular tree.  We consider each edge as a geodesic of length one.
For $x\in X$, let $|x|$ be the length of the geodesic from $0$ to $x$, where we consider each edge to be an isometric copy of the unit interval. The geodesic connecting  $x, y\in V$ is unique. We refer to it by $[x, y]$, and to its length by $|x-y|$.  We write $x\leq y$ if $x\in [0, y]$. Then $|x-y|=|y|-|x|$. We say that a vertex $y\not=x$ is a descendant of the vertex $x$ if $x\leq y$.

Towards defining the metric of $X$, let $\epsilon >0$, and set 
\begin{equation}\label{metric}
d_X(x, y)=\int_{[x, y]} e^{-\epsilon|z|}\, d\,|z|.
\end{equation}
Here $d\,|z|$ is the natural measure that gives each edge Lebesgue measure $1$; recall that  each edge is an isometric copy of the unit interval. Notice that $\diam X=2/\epsilon$ if $X$ is a $K$-ary tree with $K\geq 2$.  

The boundary $\bx$ of a tree $X$ is obtained by completing $X$ with respect to the metric $d_X$.  An element $\xi\in \bx$ can be identified with an infinite geodesic  starting at the root $0$. Equivalently we employ the labeling $\xi=0x_1x_2\cdots$, where $x_i$ is a vertex in $X$ with $|x_i|=i$, and $x_{i+1}$ is a child of $x_i$. The extension of the metric to $\bx$ can be realized in the following manner.  Given $\xi, \zeta\in \bx$, pick an infinite geodesic $[\xi, \zeta]$ connecting $\xi$ and $\zeta$. Then $d_X(\xi, \zeta)$ is the length of the  geodesic $[\xi, \zeta]$. Indeed, if $\xi=0x_1x_2\cdots$ and $\zeta=0y_1y_2\cdots$, let $k$ be the integer with $x_k=y_k$ and $x_{k+1}\not=y_{k+1}$. Then 
$$d_X(\xi, \zeta)=2\int_{k}^{+\infty} e^{-\epsilon t}\, d t=\frac 2\epsilon e^{-\epsilon k}.$$
 For more details, see \cite{BBGS,BHK01,BH99}.
For clarity, we use $\xi, \zeta, \omega$ to denote points in $\bx$ and $x, y, z$  points in $X$. 

On the regular $K$-ary tree $X$, we use the weighted measure $\mul$ introduced in \cite[Section 2.2]{KW}, defined by
\begin{equation}\label{def-measure}
d\mul(x)=e^{-\beta|x|}(|x|+C)^\lambda\, d\,|x|,
\end{equation}
 where $\beta>\log K$, $\lambda\in \mathbb R$ and $C\geq\max\{2|\lambda|/(\beta-\log K), 2(\log 4)/\epsilon\}$. For $\lambda=0$, this is the measure used in \cite{BBGS}. 

The following proposition gives the doubling property of the measure $\mul$, see \cite[Corollary 2.9]{KW}.

\begin{prop}
For any $\lambda\in \mathbb R$, 
the measure $\mu_\lambda$ is doubling, i.e., $\mul(B(x, 2r))\lesssim \mul(B(x, r))$.
\end{prop}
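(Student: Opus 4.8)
The plan is to reduce the doubling inequality to two explicit estimates: a computation of the $\mu_\lambda$-mass of the subtrees ("cones") of $X$, and a description of metric balls $B(x,r)$ in terms of such cones. First I would reduce to the case where $x$ is a vertex and $0<r<\diam X=2/\epsilon$: for $r\ge\diam X$ both balls equal $X$, and $\mu_\lambda(X)\approx\sum_{j\ge0}(Ke^{-\beta})^j(j+C)^\lambda<\infty$ since $\beta>\log K$; if $x$ lies in the interior of an edge one replaces $|x|$ by the real parameter locating it, all estimates below being insensitive to whether the "level" is an integer.

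The key lemma is: for a vertex $v$ at level $\ell=|v|$, if $T_v$ denotes the subtree consisting of $v$, all its descendants, and the edge joining $v$ to its mother, then
$$\mu_\lambda(T_v)\approx e^{-\beta\ell}(\ell+C)^\lambda,$$
with constants depending only on $K,\beta,\lambda$. This follows by splitting $T_v$ into the $K^j$ edges lying between levels $\ell+j-1$ and $\ell+j$, each carrying $\mu_\lambda$-mass $\approx e^{-\beta(\ell+j)}(\ell+j+C)^\lambda$ (using that $C$ is a fixed positive constant, so $(t+C)^\lambda$ varies by a bounded factor on each such edge), whence $\mu_\lambda(T_v)\approx e^{-\beta\ell}\sum_{j\ge0}a^j(\ell+j+C)^\lambda$ with $a:=Ke^{-\beta}\in(0,1)$. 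The series is comparable to its $j=0$ term $(\ell+C)^\lambda$: for $\lambda\le0$ the terms decrease in $j$ and the sum is at most $(\ell+C)^\lambda/(1-a)$; for $\lambda\ge0$ one bounds $(\ell+j+C)^\lambda\le(\ell+C)^\lambda(1+j)^\lambda$ and $\sum_j a^j(1+j)^\lambda<\infty$. This is where $\beta>\log K$ is essential: it forces the mass of a cone to concentrate, up to constants, near its top edge.

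Next I would describe balls. Fix a vertex $x$ with $|x|=n$. A direct computation with \eqref{metric} shows that, for a level $m\le n$, the cone $T_{x_m}$ over the ancestor $x_m$ of $x$ at level $m$ satisfies $T_{x_m}\subseteq B(x,r)$ precisely when $2e^{-\epsilon m}-e^{-\epsilon n}<\epsilon r$, while every $y\in B(x,r)$ has confluent $x\wedge y$ at a level $k$ with $e^{-\epsilon k}<\epsilon r+e^{-\epsilon n}$, so that $B(x,r)\subseteq T_{x_{k(r)}}$ for a suitable $k(r)$. Comparing the two thresholds gives, whenever $r\gtrsim e^{-\epsilon n}$, levels $m_1(r)$ and $k_0(2r)$ with $T_{x_{m_1(r)}}\subseteq B(x,r)$, $B(x,2r)\subseteq T_{x_{k_0(2r)}}$, and $|m_1(r)-k_0(2r)|\le c(\epsilon)$; with the key lemma and the slow variation of $(\,\cdot+C)^\lambda$ this yields $\mu_\lambda(B(x,2r))\le\mu_\lambda(T_{x_{k_0(2r)}})\lesssim\mu_\lambda(T_{x_{m_1(r)}})\le\mu_\lambda(B(x,r))$. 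In the complementary regime $r\lesssim e^{-\epsilon n}$, the ball $B(x,2r)$ lies in the star of $x$ (its mother edge and $K$ child edges), on which $d_X$ and $\mu_\lambda$ are comparable to $e^{-\epsilon n}$ and $e^{-\beta n}(n+C)^\lambda$ times one-dimensional length, so $\mu_\lambda(B(x,\rho))\approx e^{(\epsilon-\beta)n}(n+C)^\lambda\,\rho$ and hence $\mu_\lambda(B(x,2r))\approx2\,\mu_\lambda(B(x,r))$ up to the comparison constants. The two regimes overlap at $r\approx e^{-\epsilon n}$, where both formulas give $\approx e^{-\beta n}(n+C)^\lambda$, so they glue to the claimed global bound.

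The main obstacle is bookkeeping rather than conceptual: identifying the ancestor level that controls $B(x,r)$, keeping the $\pm O(1)$ shifts of that level (and the induced bounded distortion of the factors $e^{-\beta m}$ and $(m+C)^\lambda$) under uniform control, and disposing of boundary cases — $x$ at or near the root, $r$ near the crossover scale $e^{-\epsilon n}$, and $r$ near $\diam X$ — each handled by the finiteness and positivity of $\mu_\lambda(X)$ together with the key lemma.
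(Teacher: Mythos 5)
Your proof is correct. For the record, the paper does not prove this proposition itself: it is imported from \cite[Corollary 2.9]{KW}, so there is no in-paper argument to compare with, and your write-up supplies essentially the standard proof one would find there. The two ingredients --- the cone-mass estimate $\mu_\lambda(T_v)\approx e^{-\beta\ell}(\ell+C)^\lambda$, which is exactly where $\beta>\log K$ enters, and the sandwiching of $B(x,2r)$ and $B(x,r)$ between cones whose apex levels differ by a constant depending only on $\epsilon$, supplemented by the one-dimensional computation on the star of $x$ at scales below $e^{-\epsilon|x|}$ --- are the right ones and fit together as you describe. Two small points to tighten in a final write-up. First, for $\lambda\ge 0$ the inequality $(\ell+j+C)^\lambda\le(\ell+C)^\lambda(1+j)^\lambda$ requires $\ell+C\ge 1$, which the stated lower bounds on $C$ do not guarantee for small $\ell$; replace it by $(\ell+j+C)^\lambda\le(\ell+C)^\lambda(1+j/C)^\lambda$, whose series against $(Ke^{-\beta})^j$ still converges. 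Second, once explicit admissible constants are fixed, the regime $r\gtrsim e^{-\epsilon n}$ (where the inner cone's apex level must stay $\le n$) and the regime $r\lesssim e^{-\epsilon n}$ (where $B(x,2r)$ must stay inside the star) leave a bounded multiplicative window of radii uncovered; your observation that both formulas give $\mu_\lambda(B(x,\rho))\approx e^{-\beta n}(n+C)^\lambda$ there does dispose of it, but it should be recorded as a genuine third case, with the upper bound coming from $B(x,\rho)\subseteq T_{x_{k}}$ for some $k=n-O(1)$ and the lower bound from the star, rather than by ``gluing'' the two asymptotic formulas. Neither point affects the substance of the argument.
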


The result in \cite[Lemma 5.2]{BBGS} shows that the boundary $\bx$ of the regular $K$-ary tree $X$ is Ahlfors regular with the regularity exponent depending only on $K$ and on the metric density exponent $\epsilon$ of the tree. 
\begin{prop}\label{Ahlfor-boundary}
The boundary $\bx$ is an Ahlfors $Q$-regular space with Hausdorff dimension 
$$Q=\frac{\log K}{\epsilon}.$$
\end{prop}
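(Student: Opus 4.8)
The plan is to produce an explicit Borel measure $\nu$ on $\bx$ and verify the two-sided estimate $\nu(B(\xi,r))\approx r^{Q}$ directly, by exploiting the fact that metric balls in $\bx$ coincide with the combinatorial cylinder sets of the tree. For a vertex $v\in V$ with $|v|=n$ write $[v]=\{\xi\in\bx:\ v\in\xi\}$ for the set of infinite geodesics through $v$; these cylinders form a basis for the topology of $\bx$, and each $[v]$ with $|v|=n$ is the disjoint union of exactly $K$ cylinders $[w]$, $w$ a child of $v$. The first step is the key geometric observation: fix $\xi=0x_1x_2\cdots\in\bx$ and $0<r\le\diam\bx=2/\epsilon$, and let $n_0=n_0(r)$ be the smallest integer with $\tfrac2\epsilon e^{-\epsilon n_0}<r$ (so $n_0\ge1$). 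Using the distance formula $d_X(\xi,\zeta)=\tfrac2\epsilon e^{-\epsilon k}$, where $k$ is the last level at which $\xi$ and $\zeta$ agree, one checks that $B(\xi,r)=[x_{n_0}]$: if $\zeta\in B(\xi,r)$ then its agreement level with $\xi$ satisfies $k\ge n_0$, hence $\zeta\in[x_{n_0}]$; conversely any $\zeta\in[x_{n_0}]$ agrees with $\xi$ up to level $n_0$, so $d_X(\xi,\zeta)\le\tfrac2\epsilon e^{-\epsilon n_0}<r$.

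The second step is to construct $\nu$. Define it on cylinders by $\nu([v])=K^{-|v|}$. Since a level-$n$ cylinder is the disjoint union of its $K$ child cylinders and $K\cdot K^{-(n+1)}=K^{-n}$, this set function is finitely additive on the algebra generated by the cylinders, and by the Carath\'eodory/Kolmogorov extension theorem it extends to a Borel probability measure $\nu$ on $\bx$. (Equivalently, $\nu$ is a constant multiple of the $Q$-dimensional Hausdorff measure on $\bx$, but the explicit combinatorial description is all that is needed.)

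Combining the two steps gives the conclusion at once: $\nu(B(\xi,r))=\nu([x_{n_0}])=K^{-n_0}$, and from $\tfrac1\epsilon\log\tfrac2{\epsilon r}<n_0\le\tfrac1\epsilon\log\tfrac2{\epsilon r}+1$ we obtain $K^{-n_0}=e^{-n_0\log K}\approx e^{-\frac{\log K}{\epsilon}\log\frac2{\epsilon r}}=\big(\tfrac{\epsilon}{2}\big)^{Q}r^{Q}$, with comparison constants depending only on $K$. Hence $\nu(B(\xi,r))\approx r^{Q}$ for all $\xi\in\bx$ and $0<r\le\diam\bx$, with $Q=\log K/\epsilon$, which is exactly the asserted Ahlfors $Q$-regularity; in particular the Hausdorff dimension of $\bx$ equals $Q$.

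I do not anticipate a serious obstacle: the entire content lies in the observation that the self-similar branching structure of the regular tree turns metric balls into cylinders of precisely controlled measure, after which the estimate is a one-line computation. The only points requiring a little care are the bookkeeping in the definition of $n_0(r)$ when $r$ is close to $\diam\bx$ (where $B(\xi,r)$ is a level-one cylinder rather than all of $\bx$, which is why the restriction $r\le\diam\bx$ appears) and the appeal to the extension theorem to legitimize $\nu$; both are routine. This reproduces \cite[Lemma 5.2]{BBGS}.
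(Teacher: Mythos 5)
Your argument is correct and complete: identifying metric balls in $\bx$ with cylinder sets and equipping $\bx$ with the uniform measure $\nu([v])=K^{-|v|}$, then converting $K^{-n_0}$ into $r^{Q}$ via the definition of $n_0(r)$, is exactly the standard proof of this statement and is the argument behind \cite[Lemma 5.2]{BBGS}, which is all the paper itself supplies (the proposition is quoted there by citation, without proof). The only point worth a one-line addition is the final clause that Ahlfors $Q$-regularity forces the Hausdorff dimension to equal $Q$; this follows from the mass distribution principle together with a covering by cylinders, and is standard.
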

Hence $\bx$ is equipped with an Ahlfors $Q$-regular measure $\nu$:
$$\nu(B_{\bx}(\xi, r)) \approx r^Q=r^{\log K/\epsilon},$$
for any $\xi\in \bx$ and $0<r\leq \diam \bx.$

Throughout the paper we assume that $1\leq p<+\infty$ and that $X$ is a $K$-ary tree with $K\geq 2$.
 
\subsection{Young functions and Orlicz spaces}\label{s-Young}
In the standard definition of an Orlicz space, the function $t^p$ of an $L^p$-space is replaced with a more general convex function, a Young function. We recall the definition of a Young function. We refer  to \cite[section 2.2]{T04} and \cite{RR91} for more details about Young functions and we also warn the reader of slight differences between the definitions in various references.

A function $\Phi: [0, \infty)\rarrow [0, \infty)$ is a {\it Young function} if 
it is a continuous, increasing and convex function satisfying  $\Phi(0)=0$,
$$\lim_{t\rarrow 0+}\frac{\Phi(t)}{t}=0\ \ \text{and}\ \ \  \lim_{t\rarrow +\infty}\frac{\Phi(t)}{t}=+\infty.$$
A  Young function $\Phi$ can be expressed as
$$\Phi(t)=\int_0^t \phi(s)\, ds,$$
where $\phi: [0, \infty)\rarrow [0, \infty)$ is an increasing, right-continuous function with $\phi(0)=0$ and $\underset{t \rightarrow + \infty}{\lim}\phi(t)=+\infty.$ 

A Young function $\Phi$ is said to satisfy the $\Delta_2 -$condition if there is a constant $C_\Phi>0$, called a {\it doubling constant} of $\Phi$,  such that 
\begin{equation*}
\Phi(2 t) \le C_\Phi \Phi(t),\ \forall\ \  t \ge 0. 
\end{equation*}

If Young function $\Phi$ satisfies the $\Delta_2 -$condition, then for any constant $c>0$, there exist $c_1, c_2>0$ such that 
$$c_1\Phi(t)\leq \Phi(ct)\leq c_2\Phi(t)\ \ \ \ {\rm for\  all} \ \ \ t\geq 0,$$ 
where $c_1$ and $c_2$ depend only on $c$ and the doubling constant $C_\Phi$.  Therefore, we obtain that if $A\approx B$, then $  \Phi(A)\approx \Phi(B)$.  This property will be used frequently in the rest of this paper.

Let $\Phi_1, \Phi_2$ be two Young functions. If there exist two constants $k>0$ and $C\geq 0$ such that 
$$\Phi_1(t)\leq \Phi_2(kt)\ \ \ \ {\rm for} \ \ \ t\geq C,$$
we write 
$$\Phi_1\prec\Phi_2.$$

The function $\Phi(t)=t^p\log^{\lambda}(e+t)$ with $p >1, \lambda\in\real$ or $p=1, \lambda\geq 0$ is a Young function and it satisfies the $\Delta_2 -$condition. Moreover, it also satisfies that
\begin{equation}\label{eq-add}
t^{\max\{p-\delta, 1\}}\prec \Phi(t)\prec t^{p+\delta}
\end{equation}
for any $\delta>0$.

Let $\Phi$ be a Young function. Then the {\it Orlicz space} $L^\Phi(X)$ is defined by setting
$$L^{\Phi}(X, \mul)=\left\{u: X\rarrow \real: u\ {\rm measurable,}\ \int_X\Phi(\alpha|u|)\, d\mul<+\infty\ {\rm for \  some}\ \alpha>0\right\}.$$
As in the theory of $L^p$-spaces, the elements in $L^\Phi(X, \mul)$ are actually equivalence classes consisting of functions that differ only on a set of measure zero. The Orlicz space $L^\Phi(X, \mul)$ is a vector space  and, equipped with the {\it Luxemburg norm}
$$\|u\|_{L^\Phi(X, \mul)}=\inf\left\{k>0: \int_{X} \Phi(|u|/k)\, d\mul\leq 1\right\},$$
a Banach space, see \cite[Theorem 3.3.10]{RR91}. If $\Phi(t)=t^p$ with $p\geq 1$, then $L^{\Phi}(X, \mul)=L^p(X, \mul)$. We refer to \cite{Orl60,RR91,T04}  for more detailed discussions and properties of Orlicz spaces. 

\subsection{Newtonian spaces and Orlicz-Sobolev spaces on $X$}\label{Newtonian-space}
We call a Borel function $g: X\rarrow [0, \infty]$  an {\it upper gradient} of $u\in L^{1}_{\rm loc}(X, \mul)$ if 
\begin{equation}\label{gradient}|u(z)-u(y)|\leq \int_{\gamma} g\, ds_X\end{equation}
whenever $z, y\in X$ and $\gamma$ is the geodesic from $z$ to $y$, where $d s_X$ denotes the arc length measure with respect to the metric $d_X$. Since any rectifiable curve with end points $z$ and $y$ in our tree contains the corresponding geodesic,  the above definition  is equivalent to the usual  definition which requires that inequality \eqref{gradient} holds for all rectifiable curves with end points $z$ and $y$. See \cite{BB11,H03,HK98,HKST,N00} for a more detailed discussion on upper gradients.

The {\it Newtonian space} $N^{1, p}(X, \mul)$, $1\leq p<\infty$, is  the collection of all functions $u$ for which the norm of $u$ defined as
$$\|u\|_{N^{1, p}(X, \mul)}:= \left(\int_X |u|^p\, d\mul+\inf_g \int_X g^p\, d\mul\right)^{1/p}$$
is finite. Here  the infimum is taken over all upper gradients of $u$. 

For any Young function $\Phi$, the {\it Orlicz-Sobolev space} $N^{1, \Phi}(X, \mul)$  is defined as the collection of all functions $u$ for which the norm of $u$ defined as
$$\|u\|_{N^{1, \Phi}(X, \mul)}=\|u\|_{L^{\Phi}(X, \mul)}+\inf_g \|g\|_{L^{\Phi}(X, \mul)}$$
is finite,
where the infimum is taken over all upper gradients of $u$. 

 For the Young function $\Phi(t)=t^p$, $1\leq p<\infty$, the Orlicz-Sobolev space $N^{1, \Phi}(X, \mul)$ is exactly the Newtonian space $N^{1, p}(X, \mul)$. We refer to \cite{T04} for further results on Orlicz-Sobolev spaces on metric measure spaces. If $u\in N^{1, p}(X, \mul)$ ($u\in N^{1, \Phi}(X, \mul)$ with $\Phi$ doubling), then it has a minimal $p$-weak upper gradient ($\Phi$-weak upper gradient) $g_u$, which in our case is an upper gradient. The minimal upper gradient is minimal in the sense that  if $g\in L^p(X, \mul)$ ($g\in L^\Phi(X, \mul)$) is any upper gradient of $u$, then $g_u\leq g$ a.e. We refer the interested reader to \cite[Theorem 7.16]{H03} ($p\geq 1$) and  \cite [Corollary 6.9]{T04}($\Phi$ doubling) for proofs of the existence of such a minimal upper gradient.

\subsection{Besov-type spaces on $\bx$}\label{s-besov}
Towards the definition of our Besov-type spaces, we recall a definition from \cite{BBGS}.
\begin{defn}\label{besov-0}\rm
For $0<\theta<1$ and $p\geq 1$, The Besov space $\Besov$ consists of all functions $f\in L^p(\bx)$ for which the seminorm  $\|f\|_{\dBesov}$ defined as
$$\|f\|^p_{\dBesov}:=\int_{\bx}\int_{\bx}\frac{|f(\zeta)|-f(\xi)|^p}{d_X(\zeta, \xi)^{\theta p}\nu(B(\zeta, d_X(\zeta, \xi)))}d\nu(\xi)\, d\nu(\zeta)$$
is finite. The corresponding norm for $\Besov$ is 
$$\|f\|_{\Besov}:=\|f\|_{\lpb}+\|f\|_{\dBesov}.$$
\end{defn}

We base our definition on a dyadic decomposition on the boundary $\bx$ of the $K$-ary tree $X$, see also \cite[Section 2.4]{KW}.
 Let $V_n=\{x_j^n:  j=1, 2, \cdots, K^n\}$ be the set of all $n$-level vertices of the tree $X$ for each $n\in \N$, where a vertex $x$ is of {\it $n$-level} if $|x|=n$. Then 
$$V=\bigcup_{n\in \N} V_n.$$ 
Given a vertex $x\in V$,  set 
$$I_x:=\{\xi\in \bx: \text{the geodesic $[0, \xi)$ passes through $x$}\}.$$ 
Let $\dyadic=\{I_x: x\in V\}$ and $\dyadic_n=\{I_x: x\in V_n\}$ for each $n\in \N$. Then $\dyadic_0=\{\partial X\}$ and our dyadic decomposition $\dyadic$ satisfies
$$\dyadic =\bigcup_{n\in \N} \dyadic_n.$$
 Given $I\in \dyadic_{n}$, there is a unique element $\widehat I$ in $\dyadic_{n-1}$ such that $I\subset \widehat I$. If $I=I_x$ for some $x\in V_{n}$, then $\widehat I=I_y$ where  $y$ is the unique mother of $x$ in the tree $X$. Hence the structure of the dyadic decomposition of $\partial X$ is uniquely determined by the structure of the $K$-ary  tree $X$.

We recall a definition from \cite{KW}.
\begin{defn}\label{besov-3}\rm
For $0\leq\theta<1$, $p\geq 1$ and $\lambda\in \mathbb R$,  the Besov-type space $\lbesov$ consists of all functions $f\in L^p(\bx)$ for which the $\dlbesove$-dyadic energy of $f$ defined as
$$\|f\|^p_{\dlbesov}:=\sum_{n=1}^{\infty} e^{\epsilon n\theta p}n^\lambda \sum_{I\in \dyadic_n} \nu(I)\left|f_{I}-f_{ \widehat I}\right|^p$$
  is finite. The norm on $\lbesov$ is 
$$\|f\|_{\lbesov}:=\|f\|_{\lpb}+\|f\|_{\dlbesov}.$$
\end{defn}

The measure $\nu$ above is  the Ahlfors regular measure given by Proposition \ref{Ahlfor-boundary} and $f_I := \dashint_I f\, d\nu =\frac{1}{\nu(I)} \int_I f\, d\nu$ is the usual mean value.
 
 \begin{defn}\label{besov-1}\rm
For $0<\theta<1$ and $p\geq 1$, The Besov space $\besov$ consists of all the functions $f\in L^p(\bx)$ for which the $\dbesove$-dyadic energy of $f$ defined as
$$\|f\|^p_{\dbesov}:=\sum_{n=1}^{\infty} e^{\epsilon n\theta p}\sum_{I\in \dyadic_n} \nu(I)\left|f_{I}-f_{ \widehat I}\right|^p$$
is finite. The norm of $\besov$ is 
$$\|f\|_{\besov}:=\|f\|_{\lpb}+\|f\|_{\dbesov}.$$
\end{defn}
The Besov-type spaces $\lbesov$ and $\besov$ were first introduced in \cite{KW}. Notice that $\besov$ coincides with $\lbesov$ when $\lambda=0$.
Next we introduce the {\it Haj\l asz-Besov spaces} $N^{\theta
}_{p,p}(\bx)$ on the boundary $\bx$.
\begin{defn}\rm\label{H-besov}
(i) Let $0<\theta<\infty$ and let $u$ be a measurable function on $\bx$. A sequence of nonnegative measurable functions, $\vec{g}=\{g_k\}_{k\in \mathbb Z}$, is called a {\it fractional $\theta$-Haj\l asz gradient} of $u$ if there exists $Z\subset \bx$ with $\nu(Z)=0$ such that for all $k\in \mathbb Z$ and $\zeta, \xi\in \bx\setminus Z$ satisfying $2^{-k-1}\leq d_X(\zeta, \xi)<2^{-k}$,
\[
|u(\zeta)-u(\xi)|\leq [ d_X(\zeta, \xi)]^{\theta} [g_k(\zeta)+g_k(\xi)].
\]
Denote by $\mathbb D^\theta(u)$ the {\it collection of all fractional $\theta$-Haj\l asz gradients of $u$}.

(ii) Let $0<\theta<\infty$ and $0<p<\infty$. The {\it Haj\l asz-Besov space} $N^{\theta}_{p, p}(\bx)$ consists of  all functions $u\in L^p(\bx)$ for which the seminorm $\|u\|_{\dot N^{\theta}_{p, p}(\bx)}$ defined as
\[
\|u\|_{\dot N^{\theta}_{p, p}(\bx)}:= \inf_{\vec{g}\in \mathbb D^{\theta}(u)}\|(\|g_k\|_{L^p(\bx)})_{k\in \mathbb Z}\|_{l^p}=\inf_{\vec{g}\in \mathbb D^{\theta}(u)}\left(\sum_{k\in \mathbb Z} \int_{\bx} [g_k(\xi)]^p\, d\nu(\xi) \right)^{1/p}
\]
is finite. The norm of $N^{\theta}_{p, p}(\bx)$ is  
\[\|u\|_{ N^{\theta}_{p, p}(\bx)}:=\|u\|_{L^p(\bx)}+\|u\|_{\dot N^{\theta}_{p, p}(\bx)}.\]
\end{defn}

The following proposition states that these three Besov-type spaces $\besov$, $\Besov$ and $N^{\theta}_{p, p}(\bx)$ coincide with each other.

\begin{prop}\label{norm-equiv}
Let $0<\theta<1$ and $p\geq 1$. For any $f\in L^1_{\rm loc}(\bx)$, we have
$$\|f\|_{\dBesov}\approx \|f\|_{\dbesov}\approx \|f\|_{\dot N^{\theta}_{p, p}(\bx)}.$$
\end{prop}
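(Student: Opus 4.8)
The plan is to establish the two equivalences separately, using the dyadic decomposition $\dyadic$ as the bridge. The crucial geometric fact is that the dyadic sets $I_x$ are essentially metric balls: if $|x|=n$ then $\diam I_x \approx e^{-\epsilon n}$ and $\nu(I_x)\approx e^{-\epsilon n Q} = K^{-n}$ by Ahlfors regularity (Proposition \ref{Ahlfor-boundary}), and moreover for $\xi,\zeta\in\bx$ with $d_X(\xi,\zeta)\approx e^{-\epsilon k}$, the smallest dyadic set containing both is at level $\approx k$. I would record these comparisons first as a preliminary lemma, since they will be used repeatedly.

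For the equivalence $\|f\|_{\dBesov}\approx\|f\|_{\dbesov}$, I would discretize the double integral defining $\|f\|_{\dBesov}$ according to the level $k$ at which $\xi$ and $\zeta$ split. Using $\nu(B(\zeta,d_X(\zeta,\xi)))\approx e^{-\epsilon k Q}$ and $d_X(\zeta,\xi)^{\theta p}\approx e^{-\epsilon k \theta p}$ on the dyadic annulus, the integral becomes comparable to $\sum_k e^{\epsilon k\theta p}e^{\epsilon k Q}\int\!\!\int_{\{\text{split at }k\}}|f(\zeta)-f(\xi)|^p\,d\nu\,d\nu$. For the upper bound on $\|f\|_{\dbesov}$ one estimates $|f_I-f_{\widehat I}|^p$ by a double average of $|f(\zeta)-f(\xi)|^p$ over $\widehat I\times\widehat I$ via Jensen, summing over $I\in\dyadic_n$ and comparing weights; for the reverse bound one writes $f(\zeta)-f(\xi)$ as a telescoping sum $\sum_{j\ge k}(f_{I_j(\zeta)}-f_{I_{j+1}(\zeta)}) - (\text{same for }\xi) + (f_{I_k(\zeta)}-f_{I_k(\xi)})$ along the nested dyadic sets $I_j(\zeta)$ containing $\zeta$, uses the Lebesgue differentiation theorem so that $f_{I_j(\zeta)}\to f(\zeta)$ a.e., and controls the resulting sum by $\|\cdot\|_{\dbesov}$ with the help of a geometric-series argument in $j$ (this is where $0<\theta<1$, equivalently summability of $e^{-\epsilon\theta p(j-k)}$, enters decisively). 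This telescoping/geometric-series step is the technical heart, but it is standard for dyadic Besov norms.

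For $\|f\|_{\dbesov}\approx\|f\|_{\dot N^{\theta}_{p,p}(\bx)}$, I would pass through the fractional Haj\l asz gradient. Given $f\in\besov$, define for each $k\in\mathbb Z$ a candidate gradient $g_k$ by setting $g_k$ on $I$ (the dyadic set at level $n\approx k/\epsilon\cdot(\log 2)^{-1}$, after reconciling the $2^{-k}$ scale of Definition \ref{H-besov} with the $e^{-\epsilon n}$ scale of the tree) to be something like $e^{\epsilon n\theta}\big(\sum_{j\le n}|f_{I_j}-f_{\widehat{I_j}}| + \text{oscillation terms}\big)$; the pointwise Haj\l asz inequality on the annulus $2^{-k-1}\le d_X(\zeta,\xi)<2^{-k}$ then follows from the same telescoping identity as above, and $\sum_k\|g_k\|_{L^p}^p\lesssim\|f\|_{\dbesov}^p$ after rearranging the double sum and using $\theta<1$. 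Conversely, given $\vec g\in\mathbb D^\theta(u)$, for $I\in\dyadic_n$ pick $\zeta,\xi$ ranging over $\widehat I$; then $|u(\zeta)-u(\xi)|\le e^{-\epsilon n\theta}\,C\,(g_{k_n}(\zeta)+g_{k_n}(\xi))$ with $k_n\approx \epsilon n/\log 2$, so averaging gives $|f_I-f_{\widehat I}|\lesssim e^{-\epsilon n\theta}\,(g_{k_n})_{\widehat I}$, whence $\nu(I)|f_I-f_{\widehat I}|^p\lesssim e^{-\epsilon n\theta p}\int_{\widehat I}g_{k_n}^p\,d\nu$ by Jensen; summing over $I\in\dyadic_n$ and then over $n$ recovers $\|f\|_{\dbesov}^p\lesssim\sum_k\|g_k\|_{L^p}^p$, and one takes the infimum. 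A point requiring a little care is that a single level $n$ of $\dyadic$ corresponds to $d_X$-distances in a whole dyadic range of scales $2^{-k}$, so one must either work with the coarser scale $e^{-\epsilon n}$ throughout (reindexing $\mathbb D^\theta$ by $e^{-n}$-scales, which is harmless since the definition is scale-robust up to constants) or absorb a bounded multiplicity into the constants.

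The main obstacle I anticipate is purely bookkeeping: matching the dyadic scale $2^{-k}$ in Definition \ref{H-besov} with the tree's natural scale $e^{-\epsilon n}$, and keeping the double sums absolutely convergent when interchanging orders of summation — all of which hinge on the geometric decay afforded by $0<\theta<1$. There is no deep new idea beyond the telescoping-plus-geometric-series estimate, which I would isolate as a lemma and invoke in both halves of the proof.
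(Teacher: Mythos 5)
Your argument is correct in outline, but it takes a genuinely different route from the paper: the paper's proof of Proposition \ref{norm-equiv} consists entirely of citations --- the equivalence $\|f\|_{\dBesov}\approx \|f\|_{\dbesov}$ is quoted from \cite[Proposition 2.13]{KW}, and $\|f\|_{\dBesov}\approx \|f\|_{\dot N^{\theta}_{p,p}(\bx)}$ from \cite[Lemma 5.4]{BBGS} together with \cite[Theorem 1.2]{GKZ11} --- whereas you reconstruct the underlying arguments from scratch. What you sketch (dyadic sets as balls, discretization of the double integral by splitting level, Jensen for the upper bound, telescoping along nested dyadic sets plus a geometric series in $e^{-\epsilon\theta p(j-k)}$ for the lower bound, and the passage to and from fractional Haj\l asz gradients) is essentially the content of those cited results, so your version buys self-containedness at the cost of length, while the paper's buys brevity at the cost of sending the reader to three external references. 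One detail to fix if you write this out: in your candidate gradient $g_k$ on a level-$n$ dyadic set you sum $|f_{I_j}-f_{\widehat{I_j}}|$ over $j\le n$, but the telescoping identity that controls $|f(\zeta)-f(\xi)|$ for points splitting at level $n$ involves the \emph{fine} scales $j\ge n$; the coarse-scale differences cannot produce a pointwise bound on nearby points, and the $\ell^p(L^p)$ bound $\sum_k\|g_k\|_{L^p}^p\lesssim\|f\|_{\dbesov}^p$ also requires the weighted H\"older inequality to run over $j\ge n$ with a damping factor $e^{\delta(j-n)}$, $0<\delta<\epsilon\theta$, so that the subsequent sum over $n<j$ of $e^{\epsilon n\theta p}e^{\delta(j-n)p}$ collapses to $e^{\epsilon j\theta p}$. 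With that direction corrected, and the $2^{-k}$ versus $e^{-\epsilon n}$ scale bookkeeping you already flag, the proof goes through.
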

\begin{proof}
The first part $\|f\|_{\dBesov}\approx \|f\|_{\dbesov}$ follows by \cite[Proposition 2.13]{KW}.

The second part  $\|f\|_{\dBesov}\approx \|f\|_{\dot N^{s}_{p, p}(\bx)}$ is given by \cite[Lemma 5.4]{BBGS} and \cite[Theorem 1.2]{GKZ11}. 
\end{proof}

The dyadic norms give an easy way to introduce  Orlicz-Besov spaces by replacing $t^p$ with some Orlicz function $\Phi(t)$.  
\begin{defn}\label{besov-2}\rm
Let $\Phi$ be the  Young function  $\Phi(t)=t^p\log^{\lambda_1}(e+t)$ with $p >1, \lambda_1\in\real$ or $p=1, \lambda_1\geq 0$. Then the  Orlicz-Besov space $\obesov$ consists of all $f\in L^\Phi(\bx)$ whose norm generally defined as
$$\|f\|_{\obesov}:=\|f\|_{L^\Phi(\bx)}+\inf\left\{k>0: |f/k|_{\dobesov}\leq 1\right\}$$
is finite, where for any $g\in L^1_{\rm loc}(\bx)$, the $\dobesove$-dyadic energy is defined as
$$|g|_{\dobesov}:=\sum_{n=1}^{\infty} e^{\epsilon n(\theta-1) p}n^{\lambda_2}\sum_{I\in \dyadic_n} \nu(I)\Phi\left(\frac{\left|g_{I}-g_{\widehat I}\right|}{e^{-\epsilon n}}\right).$$
\end{defn}
In this paper, we are only interested in  the Young functions in the above definition. Hence in the rest of  this paper, we always assume that the Young function is $\Phi(t)=t^p\log^{\lambda_1}(e+t)$ with $p >1, \lambda_1\in\real$ or $p=1, \lambda_1\geq 0$.

\section{Proofs}\label{proofs}

\subsection{Proof of Theorem \ref{th1}}
\begin{proof}
{\bf Trace Part:}
Let $f\in N^{1, \Phi}(X)$. We follow an idea from \cite{KW} and set
\begin{equation}\label{trace-operator}
\Tr f(\xi):=\tilde f(\xi)=\lim_{[0, \xi)\ni x\rarrow \xi} f(x), \ \ \xi\in \bx
\end{equation}
 provided that the limit  taken along the geodesic ray $[0, \xi)$ exists. We begin by showing that the above limit exists for $\nu$-a.e. $\xi\in \bx$.

Since $g_f$ is  an upper gradient of $f$, 
 it suffices to show that the function $f^*$ defined by setting
\begin{equation}\label{trace-operator1}
{\tilde f}^*(\xi)=|f(0)|+\int_{[0, \xi)} g_f\, ds
\end{equation}
belongs to $L^p(\bx)$, where $[0, \xi)$ is the geodesic ray from $0$ to $\xi$. Indeed, if $\tilde f^*\in L^p(\bx)$, we have $|\tilde f^*|<\infty$ for $\nu$-a.e. $\xi\in \bx$, and hence the limit in $\eqref{trace-operator}$ exists for $\nu$-a.e. $\xi\in\bx$.

Fix $\xi\in \bx$.  
Set $r_j=2e^{-j\epsilon}/\epsilon$ and $x_j=x_j(\xi)$ be the ancestor of $\xi$ with $|x_j|=j$ for $j\in \mathbb N$. Then
\begin{equation}\label{relation1}
ds\approx e^{(\beta-\epsilon)j} j^{-\lambda_2}\, d\mull\approx r_j^{1-\beta/\epsilon} j^{-\lambda_2}\, d\mu, \ \ \ \ \mull([x_{j}, x_{j+1}])\approx r_j^{\beta/\epsilon} j^{\lambda_2},
\end{equation}
where $[x_j, x_{j+1}]$ is the edge connecting $x_j=x_j(\xi)$ and $x_{j+1}=x_{j+1}(\xi)$.
Thus
\begin{align}
{\tilde f}^*(\xi)&= |f(0)|+\sum_{j=0}^{+\infty} \int_{[x_{j}, x_{j+1}]} g_f \, ds\notag \\
&\approx |f(0)|+\sum_{j=0}^{+\infty}{r_j^{1-\beta/\epsilon}}j^{-\lambda_2}\int_{[x_{j}, x_{j+1}]} g_f \, d\mull\notag\\
&\approx |f(0)|+\sum_{j=0}^{+\infty}{r_j}\dashint_{[x_{j}, x_{j+1}]} g_f \, d\mull.\label{estimate-f}
\end{align}

Since $\theta=1-(\beta-\log K)/(p\epsilon)>0$,  we may choose $1\leq q<\infty$ such that $\max\{(\beta-\log K)/\epsilon, 1\}<q<p$ if $p>1$ or $q=1=p$. Let $\Psi(t)=t^{p/q}\log^{\lambda/q} (e+t)$. Then $\Psi^q=\Phi$ and $\Psi$ is a doubling Young function.  By the Jensen inequality and the doubling property of $\Psi$, since $\sum_{j=0}^{+\infty} r_j\approx 1$, we have that
\begin{align*}
\Psi({\tilde f}^*(\xi))&\lesssim \Psi (|f(0)|)+\Psi\left(\sum_{j=0}^{+\infty}{r_j}\dashint_{[x_{j}, x_{j+1}]} g_f \, d\mull\right)\\
&\lesssim \Psi (|f(0)|)+\sum_{j=0}^{+\infty} r_j  \dashint_{[x_{j}, x_{j+1}]} \Psi(g_f) \, d\mull.
\end{align*}
Choose $0<\kappa<1-(\beta-\log K)/(q\epsilon)$. If $q>1$, by the H\"older inequality, we obtain the estimate
\begin{align*}
\Phi({\tilde f}^*(\xi))=\Psi({\tilde f}^*(\xi))^q &\lesssim \Phi(|f(0)|)+  \left(\sum_{j=0}^{+\infty} r_j^{\kappa}\, r_j^{(1-\kappa)} \dashint_{[x_{j}, x_{j+1}]} \Psi(g_f) \, d\mull\right)^q\\
&\lesssim \Phi(|f(0)|)+  \sum_{j=0}^{+\infty}r_j^{(1-\kappa)q} \left(\dashint_{[x_{j}, x_{j+1}]} \Psi(g_f) \, d\mull\right)^q\\
& \lesssim \Phi(|f(0)|)+ \sum_{j=0}^{+\infty}  r_j^{q-\kappa q-\beta/\epsilon}j^{-\lambda_2}\int_{[x_{j}, x_{j+1}]} \Phi(g_f) \, d\mull.
\end{align*}
Here the second inequality use the fact
$$\sum_{j=0}^{+\infty} r_j^{kq/(q-1)}\approx 1.$$
If $q=1$, then $\Psi=\Phi$, and hence the H\"older inequality is not needed in the estimate. We conclude that 
\begin{equation*}
\Phi({\tilde f}^*(\xi))\lesssim \Phi(|f(0)|)+ \sum_{j=0}^{+\infty}  r_j^{q-\kappa q-\beta/\epsilon}j^{-\lambda_2}\int_{[x_{j}, x_{j+1}]} \Phi(g_f) \, d\mu.
\end{equation*}
Since $\nu(\bx)\approx 1$, integration of this estimate over $\bx$ together with Fubini's theorem  gives
\begin{align}
\int_\bx \Phi({\tilde f}^*(\xi))\, d\nu&\lesssim \Phi(|f(0)|) +\int_\bx\sum_{j=0}^{+\infty}  r_j^{q-\kappa q-\beta/\epsilon}j^{-\lambda_2}\int_{[x_{j}, x_{j+1}]} \Phi(g_f) \, d\mull\, d\nu(\xi)\notag\\
&=\Phi(|f(0)|) +\int_X \Phi(g_f(x))\int_\bx \sum_{j=0}^{+\infty}  r_j^{q-\kappa q-\beta/\epsilon}j^{-\lambda_2}\chi_{[x_j, x_{j+1}]}(x)\, d\nu(\xi)\, d\mull(x). \label{add-eq-1}
\end{align}
Since $\chi_{[x_j, x_{j+1}]}(x)$ is nonzero only if $j\leq |x|\leq j+1$ and $x<\xi$, our estimate \eqref{add-eq-1} can be reformulated as 
\begin{equation}\label{add-eq-2}
\int_\bx \Phi({\tilde f}^*(\xi))\, d\nu\lesssim \Phi(|f(0)|) +\int_X \Phi(g_f(x)) r_{j(x)}^{q-\kappa q-\beta/\epsilon}j(x)^{-\lambda_2}\nu(E(x))\, d\mu(x),
\end{equation}
where $E(x)=\{\xi\in \bx: x<\xi\}$ and $j(x)$ is the largest integer such that $j(x)\leq |x|$.

By Proposition \ref{Ahlfor-boundary}, we have $\nu(E(x))\approx r_{j(x)}^Q$, since $E(x)=B(\xi, r)$ for any $\xi\in E(x)$ and $r=\diam(E(x))/2\approx e^{-\epsilon j(x)}$, see \cite[Lemma 5.21]{BBGS}.  This together with $q-\kappa q-\beta/\epsilon+Q>0$ gives
$$r_{j(x)}^{p(1-\kappa)-\beta/\epsilon+Q} j(x)^{-\lambda_2}\lesssim 1.$$
Consequently, \eqref{add-eq-2} implies that
\begin{align*}
\int_\bx\Phi({\tilde f}^*(\xi))\, d\nu &\lesssim \Phi(|f(0)|) +\int_X \Phi(g_f(x)) r_{j(x)}^{q-\kappa q-\beta/\epsilon+Q}j(x)^{-\lambda_2}\, d\mull(x)\\
&\lesssim  \Phi(|f(0)|) +\int_X \Phi(g_f(x)) \, d\mull(x).
\end{align*}
Actually, the value $|f(0)|$ is not essential. For any $y\in\{x\in X: |x|<1\}$, a neighborhood of $0$, we could modify the definition of ${\tilde f}^*(\xi)$ as
$${\tilde f}^*(\xi)=|f(y)|+|f(y)-f(0)|+\sum_{j=0}^{+\infty} |f(x_{j+1})-f(x_{j})|.$$
Since $\mull(X)\approx 1$, we have that
$$\Phi(|f(y)-f(0)|)\leq\Phi\left(\int_{[0, y]} g_f\, ds\right) \leq\Phi\left(\int_{X} g_f\, ds\right)\lesssim \int_X \Phi(g_f)\, d\mull.$$
By the same argument  as above, we obtain the estimate 
$$\int_\bx\Phi({\tilde f}^*(\xi))\, d\nu(\xi)\lesssim \Phi(|f(y)|)+\int_X \Phi(g_f)\, d\mull,$$
for any $y\in \{x\in X: |x|<1\}$.  The fact that $f\in L^{\Phi}(X, \mull)$ gives us that $\Phi(|f(y)|)<\infty$ for $\mull$-a.e.  $y\in X$. This shows that ${\tilde f}^*(\xi)$ is $L^\Phi$-integrable on $\bx$, which finishes the proof of the existence of the limit in \eqref{trace-operator}. 

We continue towards norm estimates. Since $|\tilde f|\leq {\tilde f}^*$ for any modified ${\tilde f}^*$, the above arguments also show that for any $y\in \{x\in X: |x|<1\}$, we have that
$$\int_\bx\Phi(\tilde f(\xi))\, d\nu(\xi) \lesssim\Phi(|f(y)|)+ \int_X \Phi(g_f)\, d\mull.$$
Integrating over all $y\in \{x\in X: |x|<1\}$, since $\mull(\{x\in X: |x|<1\})\approx 1$, we arrive at the estimate 
\begin{equation}\label{energy}
\int_\bx\Phi(\tilde f(\xi))\, d\nu(\xi)\lesssim \int_X \Phi(|f|)\, d\mull+\int_X \Phi(g_f)\, d\mull.
\end{equation}
Assume that $\|f\|_{L^\Phi(X, \mull)}=t_1$ and $\|g_f\|_{L^\Phi(X, \mull)}=t_2$. By the definition of Luxemburg norms, we know that 
$$\int_{X}\Phi(f/t_1)\, d\mull\leq 1\ \ \ {\rm and} \ \ \ \int_{X}\Phi(g_f/t_2)\, d\mull\leq 1.$$
By estimate \eqref{energy}, there exists a constant $C>0$ such that
$$\int_\bx\Phi(\tilde f(\xi))\, d\nu(\xi)\lesssim C\left(\int_X \Phi(|f|)\, d\mull+\int_X \Phi(g_f)\, d\mull\right).$$
We may assume $C\geq 1$, since if $C<1$, we choose $C=1$. Then we obtain that
\begin{align*}
\int_\bx \Phi\left(\frac{\tilde f(\xi)}{2C(t_1+t_2)}\right)\, d\nu&\leq C\left(\int_X\Phi\left(\frac{f}{2Ct_1}\right)\, d\mull+\int_X\Phi\left(\frac{g_f}{2Ct_2}\right)\, d\mull\right)\\
&\leq \frac12\left(\int_{X}\Phi(f/t_1)\, d\mull+\int_{X}\Phi(g_f/t_2)\, d\mull\right)\leq 1,
\end{align*}
which implies
\begin{equation}\label{L_Phi}
\|\tilde f(\xi)\|_{L^\Phi(\bx)}\leq 2C(t_1+t_2)\approx \|f\|_{L^\Phi(X, \mull)}+\|g_f\|_{L^\Phi(X,\mull)}=\|f\|_{N^{1, \phi}(X, \mull)}.
\end{equation}

Next, we estimate the dyadic energy $|\tilde f|_{\dobesov}$. Given $I\in \dyadic_n$, $\xi\in I$ and $\zeta\in \widehat I$, we have $x_{n-1}=y_{n-1}$, where $x_j=x_j(\xi)$ and $y_j=y_j(\zeta)$ are the ancestors of $\xi$ and $\zeta$ with $|x_j|=|y_j|=j$, and hence that
\begin{equation}\label{add-eq-3}
|\tilde f(\xi)-\tilde f(\zeta)|\leq \sum_{j=n-1}^{+\infty}|f(x_j)-f(x_{j+1})|+\sum_{j=n-1}^{+\infty}|f(y_j)-f(y_{j+1})|.
\end{equation}
 By  \eqref{relation1} and an  argument similar to \eqref{estimate-f}, we infer from \eqref{add-eq-3} that
$$|\tilde f(\xi)-\tilde f(\zeta)|\lesssim \sum_{j=n-1}^{+\infty}r_j \dashint_{[x_{j}, x_{j+1}]} g_f \, d\mull+\sum_{j=n-1}^{+\infty}r_j \dashint_{[y_{j}, y_{j+1}]} g_f \, d\mull.$$
 It follows from the Jensen inequality that
\begin{align*}
\Psi\left(\frac{|\tilde f(\xi)-\tilde f(\zeta)|}{e^{-\epsilon n}}\right)&\lesssim {\sum_{j=n-1}^{+\infty}  r_{n-1}^{-1}r_j \dashint_{[x_{j}, x_{j+1}]} \Psi(g_f) \, d\mull}+{\sum_{j=n-1}^{+\infty}  r_{n-1}^{-1}r_j \dashint_{[y_{j}, y_{j+1}]} \Psi(g_f) \, d\mull},
\end{align*}
since we have the estimate 
$$r_{n-1}\approx e^{-\epsilon n}\approx \sum_{j=n-1}^{+\infty} r_j.$$
By using  the fact $\Phi=\Psi^q$ and the H\"older inequality if $q> 1$ (if $q=1$, the H\"older inequality is not needed), we get that
\begin{align*}
\Phi\left(\frac{|\tilde f(\xi)-\tilde f(\zeta)|}{e^{-\epsilon n}}\right)&=\Psi\left(\frac{|\tilde f(\xi)-\tilde f(\zeta)|}{e^{-\epsilon n}}\right)^q \\
&\lesssim r_{n-1}^{-q+\kappa q}\sum_{j=n-1}^{+\infty} r_j^{q-\beta/\epsilon-\kappa q} j^{-\lambda_2}\left(\int_{[x_{j}, x_{j+1}]} \Phi(g_f) \, d\mull+\int_{[y_{j}, y_{j+1}]} \Phi(g_f) \, d\mull\right).
\end{align*}
Since $\nu(I) \approx \nu(\widehat I)$ and  $\widehat I$ is the parent of $I$, it follows from Fubini's theorem  that
\begin{align}
\sum_{I\in\dyadic_n}\nu(I) &\Phi\left(\frac{|\tilde f_I-\tilde f_{\widehat I}|}{e^{-\epsilon n}}\right)\leq \sum_{I\in\dyadic_n}\nu(I) \dashint_I \dashint_{\widehat I}\Phi\left(\frac{|\tilde f(\xi)-\tilde f(\zeta)|}{e^{-\epsilon n}}\right)\, d\nu(\zeta)\, d\nu(\xi)\notag\\
&\lesssim \int_{\bx} r_{n-1}^{-q+\kappa q} \sum_{j=n-1}^{+\infty} r_j^{q-\beta/\epsilon-\kappa q}j^{-\lambda_2}\int_{[x_{j}, x_{j+1}]} \Phi(g_f) \, d\mull\, d\nu(\xi)\notag\\
&=\int_{X\cap \{|x|\geq n-1\}}\Phi(g_f) r_{n-1}^{-q+\kappa q}\int_{\bx}\sum_{j=n-1}^{+\infty} r_j^{q-\beta/\epsilon-\kappa q}j^{-\lambda_2}\chi_{[x_j, x_{j+1}]}(x)\, d\nu(\xi)\, d\mull(x).\label{add-eq-4}
\end{align}
Note again that $\chi_{[x_j, x_{j+1}]}(x)$ is nonzero only if $j\leq |x|\leq j+1$ and $x<\xi$. Recall $E(x)=\{\xi\in \bx: x<\xi\}$ and that $j(x)$ is the largest integer such that $j(x)\leq |x|$. Then $\nu(E(x))\lesssim r_{j(x)}^Q$. Hence \eqref{add-eq-4} gives
\begin{align*}
\sum_{I\in\dyadic_n}\nu(I) \Phi\left(\frac{|\tilde f_I-\tilde f_{\widehat I}|}{e^{-\epsilon n}}\right)&\lesssim \int_{X\cap \{|x|\geq n-1\}}\Phi(g_f) r_{n-1}^{-q+\kappa q} r_{j(x)}^{q-\beta/\epsilon-\kappa q}j(x)^{-\lambda_2}\nu(E(x))\, d\mull(x)\\
&\lesssim \int_{X\cap \{|x|\geq n-1\}}\Phi(g_f) r_{n-1}^{-q+\kappa q} r_{j(x)}^{q-\beta/\epsilon-\kappa q+Q}j(x)^{-\lambda_2} \, d\mull(x).
\end{align*}
  Since $e^{-\epsilon n}\approx r_{n-1}$, we conclude the estimate
\begin{align*}
|\tilde f|_{\dobesov}&\lesssim \sum_{n=1}^{+\infty} r_{n-1}^{(1-\theta)p-q+\kappa q} n^{\lambda_2}\int_{X\cap \{|x|\geq n-1\}}\Phi(g_f)  r_{j(x)}^{q-\beta/\epsilon-\kappa q+Q}j(x)^{-\lambda_2}\, d\mull(x)\\
&=\sum_{n=0}^{+\infty} r_{n}^{(1-\theta)p-q+\kappa q}(n+1)^{\lambda_2} \sum_{j=n}^{+\infty} \int_{X\cap\{j\leq|x|<j+1\}} \Phi(g_f)  r_j^{q-\beta/\epsilon-\kappa q+Q}j^{-\lambda_2}\, d\mull(x)\\
&= \sum_{j=0}^{+\infty} \int_{X\cap\{j\leq|x|<j+1\}} \Phi(g_f)  r_j^{q-\beta/\epsilon-\kappa q+Q}j^{-\lambda_2}\, d\mull(x) \left(\sum_{n=0}^{j} r_{n}^{(1-\theta)p-q+\kappa q}(n+1)^{\lambda_2}\right).
\end{align*}
Recall that $r_n=2e^{-n\epsilon}/\epsilon$ and 
$$(1-\theta)p-q+\kappa q=\kappa q-(q-(\beta-\log K)/\epsilon)=\kappa q+\beta/\epsilon-q-\log K/\epsilon<0.$$
Hence we obtain that
$$\sum_{n=0}^{j} r_{n}^{(1-\theta)p-q+\kappa q}(n+1)^{\lambda_2} \approx r_j^{\kappa q+\beta/\epsilon -q-\log K/\epsilon}(j+1)^{\lambda_2}= r_j^{\kappa q+\beta/\epsilon-q-Q}j^{\lambda_2}. $$
Therefore, our estimate above for the dyadic energy can be rewritten as 
$$|\tilde f|_{\dobesov} \lesssim\sum_{j=0}^{+\infty} \int_{X\cap\{j\leq|x|<j+1\}} \Phi(g_f) \, d\mull(x) =\int_X \Phi(g_f) \, d\mull(x).$$
By an  argument similar to the one that we used to prove \eqref{L_Phi} after getting \eqref{energy}, we have that
$$\inf\left\{k>0: |\tilde f/k|_{\dobesov}\leq 1\right\}\lesssim \|g_f\|_{L^{\Phi}(X, \mull)},$$
which together with \eqref{L_Phi} gives the norm estimate
$$\|\tilde f\|_{\obesov}\lesssim \|f\|_{N^{1, \Phi}(X, \mull)}.$$

{\bf Extension Part:} Fix $u\in \obesov$. Given $x\in X$ with $|x|=n\in \mathbb N$, set
\begin{equation}\label{extension-operator1}
\tilde u(x)=\dashint_{I_x} u\, d\nu,
 \end{equation} 
 where $I_x\in \dyadic_n$ is the set of all the points $\xi\in \bx$ such that the geodesic $[0, \xi)$ passes through $x$. 
 
 Let $y$ be a child of $x$. Then $|y|=n+1$ and $I_x$ is the mother of $I_y$. We define $\tilde u$ on the edge $[x, y]$ by setting
\begin{equation}\label{extension-operator2}
g_{\tilde u}(t)=\frac{\tilde u(y)-\tilde u(x)}{d_X(x, y)}=\frac{\epsilon (u_{I_y}-u_{I_x})}{(1-e^{-\epsilon})e^{-\epsilon n}}=\frac{\epsilon (u_{I_y}-u_{\widehat I_y})}{(1-e^{-\epsilon})e^{-\epsilon n}}
\end{equation}
 and
\begin{equation}\label{extension-operator3}
\tilde u(t)= \tilde u(x)+g_{\tilde u}(t)d_X(x, t).
\end{equation}
By repeating this procedure for all edges, we obtain an extension $\tilde u$ of $u$. Then \eqref{trace-operator} and \eqref{extension-operator1} imply that $\Tr \tilde u(\xi)=u(\xi)$ whenever $\xi\in \bx$ is a Lebesgue point of $u$. 

Simple integration shows that $|g_{\tilde u}|$ is an upper gradient of $\tilde u$. Clearly
\begin{align*}\int_{[x, y]}\Phi(|g_{\tilde u}|)\, d\mull&\approx \int_n^{n+1} \Phi\left(\frac{ |u_{I_y}-u_{\widehat I_y}|}{e^{-\epsilon (n+1)}}\right) e^{-\beta\tau}(\tau+C)^{\lambda_2}\, d\tau\\
&\approx e^{-\beta (n+1)}(\tau+1)^{\lambda_2}\Phi\left(\frac{ |u_{I_y}-u_{\widehat I_y}|}{e^{-\epsilon (n+1)}}\right).
\end{align*}
By summing over all the edges of  $X$, we conclude  that
\begin{equation}\label{tag1}
\int_{X}\Phi(|g_{\tilde u}|)\, d\mull \approx \sum_{n=1}^{+\infty}\sum_{I\in\dyadic_n} e^{-\beta n}n^{\lambda_2}\Phi\left(\frac{ |u_{I}-u_{\widehat I}|}{e^{-\epsilon n}}\right).
\end{equation}
 We have that
$$\nu(I) \approx e^{-\epsilon n Q}$$
whenever  $I\in \dyadic_n$, 
which implies that
\begin{equation}\label{tag2} 
e^{\epsilon n(\theta-1)p}\nu(I)\approx e^{-\epsilon n((\beta-\log K)/\epsilon+Q)}\approx e^{-\beta n}.
\end{equation}
The above estimates \eqref{tag1} and \eqref{tag2} give
\begin{equation}\label{extension-energy}
\int_{X}\Phi(|g_{\tilde u}|)\, d\mull \approx\sum_{n=1}^{\infty} e^{\epsilon n(\theta-1) p}n^{\lambda_2}\sum_{I\in \dyadic_n} \nu(I)\Phi\left(\frac{\left|u_{I}-u_{\widehat I}\right|}{e^{-\epsilon n}}\right)= |u|_{\dobesov}.
\end{equation}
 
 Towards the $L^\Phi$-estimate of $\tilde u$, notice that 
 \begin{equation}\label{L_p-relation}
 |\tilde u(t)|\leq |\tilde u(x)|+|g_{\tilde u}|d_X(x, y)=|\tilde u(x)|+|\tilde u(y)-\tilde u(x)|\lesssim |u_{I_x}|+|u_{I_y}|
 \end{equation}
 for any $t\in [x, y]$. Since $\mull([x, y])\approx e^{-\beta n}n^{\lambda_2}$ and $\nu(I_x)\approx \nu(I_y)\approx e^{-\epsilon n Q}$, this gives us
 $$\int_{[x, y]} \Phi(|\tilde u(t)|)\, d\mull \lesssim \mull([x, y])\big(\Phi(|u_{I_x}|)+\Phi(|u_{I_y}|)\big) \lesssim e^{-\beta n+\epsilon n Q} n^{\lambda_2}\int_{I_x} \Phi(|u|)\, d\nu.$$
  By summing over all the edges of  $X$, we arrive at
 \begin{align*}
 \int_X \Phi(|\tilde u(t)|)\, d\mull &\lesssim \sum_{n=0}^{+\infty} \sum_{I\in \dyadic_n} e^{-\beta n+\epsilon n Q} n^{\lambda_2}\int_{I} \Phi(|u|)\, d\nu\\
 &= \sum_{n=0}^{+\infty}e^{-\beta n+\epsilon n Q}n^{\lambda_2}\int_{\bx}\Phi(|u|)\, d\nu.
 \end{align*}
The sum of $e^{-\beta n+\epsilon n Q}n^{-\lambda_2}$ converges, because $\beta-\epsilon Q=\beta-\log K>0$. It follows that
\begin{equation}\label{extension-L_Phi}
\int_X \Phi(|\tilde u(t)|)\, d\mull\lesssim \int_{\bx}\Phi(|u|)\, d\nu.
\end{equation}

Applying the very same arguments that we used in proving \eqref{L_Phi} after getting \eqref{energy} to \eqref{extension-energy} and \eqref{extension-L_Phi},  we finally arrive at the desired estimate
$$\|\tilde u\|_{N^{1, \Phi}(X, \mull)}\lesssim \|u\|_{\obesov}.$$
\end{proof}


\subsection{Proof of proposition \ref{prop1.2}}
In this section, we always assume that $\Phi(t)=t^p\log^{\lambda_1}(e+t)$ with $p >1, \lambda_1\in\real$ or  $p=1, \lambda_1\geq 0$.

\begin{lem}\label{lem3.1}
Let $\lambda, \lambda_1, \lambda_2\in \real$.
Assume that $\lambda_1+\lambda_2=\lambda$. For any $f\in L^1(\bx)$, we have that 
$\|f\|_{\dlbesov}<\infty$ is equivalent to $|f|_{\dobesov}<\infty$ whenever $0<\theta<1$.
\end{lem}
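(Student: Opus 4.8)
The plan is to unwind both dyadic energies into sums over the levels $n$ and over the dyadic cubes $I\in\dyadic_n$, compare the two summands $e^{\epsilon n\theta p}n^\lambda\nu(I)|f_I-f_{\widehat I}|^p$ and $e^{\epsilon n(\theta-1)p}n^{\lambda_2}\nu(I)\Phi(|f_I-f_{\widehat I}|/e^{-\epsilon n})$ term by term, and show that the finiteness of one sum forces the finiteness of the other. The key observation is that since $\nu(I)\approx e^{-\epsilon n Q}$ for $I\in\dyadic_n$ (used already in the proof of Theorem~\ref{th1}, see \eqref{tag2}), the two weights $e^{\epsilon n\theta p}n^\lambda\nu(I)$ and $e^{\epsilon n(\theta-1)p}n^{\lambda_2}\nu(I)$ differ precisely by a factor $e^{\epsilon np}n^{\lambda_1}$ (recall $\lambda_1+\lambda_2=\lambda$). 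Writing $a_{n,I}:=|f_I-f_{\widehat I}|$ and $b_{n,I}:=a_{n,I}/e^{-\epsilon n}=e^{\epsilon n}a_{n,I}$, we have $e^{\epsilon np}a_{n,I}^p = b_{n,I}^p$, so that the first summand becomes $e^{\epsilon n\theta p}n^\lambda\nu(I)e^{-\epsilon np}b_{n,I}^p$, i.e. exactly $e^{\epsilon n(\theta-1)p}n^{\lambda_2}\nu(I)\cdot n^{\lambda_1}b_{n,I}^p$. Thus the two energies are, up to harmless constants, $\sum_{n,I}w_{n,I}n^{\lambda_1}b_{n,I}^p$ versus $\sum_{n,I}w_{n,I}\Phi(b_{n,I})$ with $w_{n,I}:=e^{\epsilon n(\theta-1)p}n^{\lambda_2}\nu(I)>0$, and $\Phi(t)=t^p\log^{\lambda_1}(e+t)$.

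The comparison of $n^{\lambda_1}b^p$ with $\Phi(b)=b^p\log^{\lambda_1}(e+b)$ splits into the regime where $b$ is ``large relative to level $n$'' and where $b$ is ``small''. I would first record the pointwise consequence of finiteness of either energy: if either sum is finite, then $w_{n,I}b_{n,I}^p\to 0$ (using $n^{\lambda_1}\ge c n^{\lambda_1}$ is not quite enough for $\lambda_1<0$; instead use $\Phi(b)\gtrsim b^{\max\{p-\delta,1\}}$ from \eqref{eq-add} on one side, and $\Phi(b)\lesssim b^{p+\delta}$ on the other, together with the exponential weight $e^{\epsilon n(\theta-1)p}$ which is summable-in-$n$ after the level sum $\sum_{I\in\dyadic_n}\nu(I)=\nu(\bx)\approx1$ is performed — here one uses $\theta<1$), to conclude that the relevant $b_{n,I}$ are bounded, indeed that for each fixed $n$ only finitely many $I$ contribute a $b_{n,I}$ bounded away from $0$, and globally $b_{n,I}\lesssim 1$ outside a set whose $w$-weighted contribution is already controlled. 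On the set where $b_{n,I}\le e$ say, $\log^{\lambda_1}(e+b)\approx 1$ (if $\lambda_1\ge0$ trivially; if $\lambda_1<0$ then $\log^{\lambda_1}(e+b)\in[\log^{\lambda_1}(2e),1]$, still comparable to $1$), so $\Phi(b)\approx b^p$, whereas $n^{\lambda_1}b^p$ carries the extra factor $n^{\lambda_1}$; to absorb this I would trade the $n$-power against a sliver of the exponential weight $e^{\epsilon n(\theta-1)p}$ (legitimate since $(\theta-1)p<0$, so $e^{\epsilon n(\theta-1)p}n^{|\lambda_1|}\lesssim e^{\epsilon n(\theta-1)p/2}$ etc.), reducing matters to the already-finite sum with a slightly worse exponent. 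On the set where $b_{n,I}>e$ one has $b>n$ only for finitely many pairs at each level after the above boundedness step, and there $\log(e+b)\approx\log b$; this contribution is a finite sum and thus harmless for the qualitative ``finite iff finite'' statement.

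The main obstacle I anticipate is the bookkeeping for $\lambda_1<0$ (and for $\lambda_2<0$): there the logarithmic factor in $\Phi$ is a \emph{decreasing} perturbation and the $n^{\lambda_1}$, $n^{\lambda_2}$ factors are decreasing in $n$, so one cannot simply say ``$\Phi(b)\ge b^p$'' or ``$n^{\lambda_i}\ge1$''. The clean way around this is to never compare $n^{\lambda_1}b^p$ and $\Phi(b)$ directly but always to first absorb the polynomial-in-$n$ factors into the exponential weight $e^{\epsilon n(\theta-1)p}$ (possible because $\theta<1$ makes that exponent strictly negative, so $e^{\epsilon n(\theta-1)p}n^{s}$ is summable in $n$ for every real $s$ and is $\lesssim e^{\epsilon n(\theta-1)p(1-\eta)}$ for small $\eta>0$), and then compare $b^p$ with $\Phi(b)$ using only \eqref{eq-add}: $b^{\max\{p-\delta,1\}}\lesssim\Phi(b)\lesssim b^{p+\delta}$ on $b\ge$ const, with the gap $\delta$ again paid for out of the exponential weight. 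Since the statement to be proved is only the qualitative equivalence of finiteness (not a two-sided norm bound), these slack-of-$\delta$ estimates suffice in both directions, and the proof closes.
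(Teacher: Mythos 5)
Your reduction of the problem to comparing $n^{\lambda_1}b_{n,I}^p$ with $\Phi(b_{n,I})=b_{n,I}^p\log^{\lambda_1}(e+b_{n,I})$, where $b_{n,I}=e^{\epsilon n}|f_I-f_{\widehat I}|$ and $w_{n,I}=e^{\epsilon n(\theta-1)p}n^{\lambda_2}\nu(I)$, is exactly right and matches the paper's starting point, but your treatment of the large-$b$ regime has a genuine gap. First, the claimed boundedness of the relevant $b_{n,I}$ is false: since $w_{n,I}\approx e^{\epsilon n((\theta-1)p-Q)}n^{\lambda_2}$ decays exponentially, convergence of $\sum_{n,I}w_{n,I}n^{\lambda_1}b_{n,I}^p$ is perfectly consistent with $b_{n,I}\rightarrow\infty$ exponentially; in the borderline case (say all $b_{n,I}=e^{\epsilon(1-\theta)n}n^{-(\lambda+2)/p}$ at level $n$, which makes the level-$n$ block of the dyadic energy equal to $n^{-2}$) essentially \emph{all} of the energy is carried by terms with $b_{n,I}>n$, so dismissing that contribution as ``a finite sum'' is not legitimate — it is finite at each level but must still be summed over infinitely many levels. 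Second, the $\delta$-slack bounds $b^{\max\{p-\delta,1\}}\lesssim\Phi(b)\lesssim b^{p+\delta}$ from \eqref{eq-add} cannot close this regime: when $b_{n,I}\sim e^{cn}$ the loss $b_{n,I}^{\pm\delta}\sim e^{\pm c\delta n}$ is itself exponential in $n$, and in the borderline case there is no spare exponential decay left in $w_{n,I}$ to absorb it — the series $\sum w_{n,I}n^{\lambda_1}b_{n,I}^p$ above converges while $\sum w_{n,I}b_{n,I}^{p+\delta}$ diverges for every $\delta>0$.

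What is missing is the two-sided comparison $\log^{\lambda_1}(e+b_{n,I})\approx n^{\lambda_1}$ on the set that actually carries the energy, and this is where the hypothesis $f\in L^1(\bx)$ — which your argument never uses — enters. The paper obtains the upper bound $\log(e+b_{n,I})\lesssim n$ from $|f_I|+|f_{\widehat I}|\lesssim\|f\|_{L^1(\bx)}/\nu(I)\approx\|f\|_{L^1(\bx)}\,e^{n\log K}$, so that $b_{n,I}\lesssim e^{(\epsilon+\log K)n}$; and it obtains the lower bound $\log(e+b_{n,I})\gtrsim n$ by thresholding at $|f_I-f_{\widehat I}|>e^{-\epsilon n(\theta+1)/2}$, i.e.\ $b_{n,I}>e^{\epsilon n(1-\theta)/2}$ — an exponential-in-$n$ cutoff, not your $b>e$ or $b>n$ — which forces $\log(e+b_{n,I})\geq\log\bigl(e+e^{\epsilon n(1-\theta)/2}\bigr)\gtrsim n$. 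Below that threshold the contribution to either energy is at most a constant times $\sum_n e^{\epsilon np(\theta-1)/2}n^{|\lambda|+|\lambda_2|}<\infty$, which is the only place where the qualitative ``finite iff finite'' formulation (rather than a norm equivalence) is exploited. With these two ingredients the sign cases $\lambda_1>0$ and $\lambda_1<0$ are handled symmetrically, using whichever of the two bounds on the logarithm points in the needed direction; without them the comparison in the large-$b$ regime does not go through.
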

\begin{proof}
When $\lambda_1=0$, then the result is obvious since $\|f\|^p_{\dlbesov}=|f|_{\dobesov}$.

When $\lambda_1>0$, first we estimate the logarithmic term from above. Since $f\in L^1(\bx)$, for any $I\in \dyadic_n$, it follows from $\nu(I)\approx \nu(\widehat I)\approx e^{-n\log K}$ that
\begin{align*}
\log^{\lambda_1}\left(e+\frac{|f_I-f_{\widehat I}|}{e^{-\epsilon n}}\right)\leq \log^{\lambda_1}\left(e+\frac{|f_I|+|f_{\widehat I}|}{e^{-\epsilon n}}\right) \lesssim \log^{\lambda_1}\left(e+\frac{\|f\|_{L^1(\bx)}}{e^{-(\epsilon +\log K) n}}\right)\leq C n^{\lambda_1}, 
\end{align*}
where $C=C(\|f\|_{L^1(\bx)}, \lambda_1, \epsilon, K)$. Hence we can estimate $|f|_{\dobesov}$ as follows:
\begin{align*}
|f|_{\dobesov}&=\sum_{n=1}^{\infty} e^{\epsilon n(\theta-1) p}n^{\lambda_2}\sum_{I\in \dyadic_n} \nu(I)\Phi\left(\frac{\left|g_{I}-g_{\widehat I}\right|}{e^{-\epsilon n}}\right)\\
&=\sum_{n=1}^{\infty} e^{\epsilon n \theta p} n^{\lambda_2} \sum_{I\in \dyadic_n} \nu(I) |f_I-f_{\widehat I}|^p\log^{\lambda_1}\left(e+\frac{|f_I-f_{\widehat I}|}{e^{-\epsilon n}}\right)\\
&\leq C \sum_{n=1}^{\infty} e^{\epsilon n \theta p} n^{\lambda_2+\lambda_1} \sum_{I\in \dyadic_n} \nu(I) |f_I-f_{\widehat I}|^p=C\|f\|^p_{\dlbesov},
\end{align*}
where $C=C(\|f\|_{L^1(\bx)}, \lambda_1, \epsilon, K)$.

In order to estimate the logarithmic term from below, for any $I\in \dyadic_n$, we define
\begin{equation}\label{chi}
\chi(n, I)=\left\{\begin{array}{cc}
1, &\  \mathrm {if} \ \  |f_{I}- f_{\widehat I}|>e^{-\epsilon n(\theta+1)/2}\\
0,  & \mathrm{ otherwise}.
\end{array}
\right.
\end{equation}
Then we have that
\begin{align*}
\|f\|^p_{\dlbesov}&=\sum_{n=1}^{\infty} e^{\epsilon n \theta p} n^{\lambda} \sum_{I\in \dyadic_n} \nu(I) |f_I-f_{\widehat I}|^p\\
&=\sum_{n=1}^{\infty} e^{\epsilon n \theta p} n^{\lambda} \sum_{I\in \dyadic_n} \nu(I) \chi(n, I)|f_I-f_{\widehat I}|^p\\
&\ \ \ \ + \sum_{n=1}^{\infty} e^{\epsilon n \theta p} n^{\lambda} \sum_{I\in \dyadic_n} \nu(I) (1-\chi(n,I))|f_I-f_{\widehat I}|^p\\
&=: P_1+P_2.
\end{align*}
If $|f_{I}- f_{\widehat I}|>e^{-\epsilon n(\theta+1)/2}$, since $\theta<1$ and $\lambda_1>0$, we obtain that
$$\log^{\lambda_1}\left(e+\frac{|f_I-f_{\widehat I}|}{e^{-\epsilon n}}\right)>\log^{\lambda_1}\left(e+e^{\epsilon n (1-\theta)/2}\right)\geq C n^{\lambda_1},$$
where $C=C(\epsilon, \theta, \lambda_1)$. Hence we have the estimate
$$P_1\leq C\sum_{n=1}^{\infty} e^{\epsilon n \theta p} n^{\lambda_2} \sum_{I\in \dyadic_n} |f_I-f_{\widehat I}|^p \log^{\lambda_1}\left(e+\frac{|f_I-f_{\widehat I}|}{e^{-\epsilon n}}\right)=C |f|_{\dobesov}.$$
For $P_2$, 
since $\sum_{I\in\dyadic_n}\nu(I)\approx 1$, we have that

$$P_2 \leq \sum_{n=1}^{\infty} e^{\epsilon n \theta p} n^{\lambda} \sum_{I\in \dyadic_n} \nu(I) e^{-\epsilon n p(\theta+1)/2} \approx \sum_{n=1}^{\infty} e^{\epsilon n p (\theta-1)/2} n^\lambda =C'<+\infty,$$
where $C'=C'(\theta, p, \lambda)$. Therefore, we obtain
\begin{equation}\label{comparable1}
\frac 1C |f|_{\dobesov}\leq \|f\|^p_{\dlbesov}=P_1+P_2\leq C|f|_{\dobesov} +C',
\end{equation}
where $C$ and $C'$ are constants depending only on $\epsilon, \theta, \lambda_1, \lambda, p$ znd $\|f\|_{L^1(\bx)}$.

When $\lambda_1<0$, in order to estimate the logarithmic term from above, using definition \eqref{chi}, we obtain that
\begin{align*}
|f|_{\dobesov}&=\sum_{n=1}^{\infty} e^{\epsilon n \theta p} n^{\lambda_2} \sum_{I\in \dyadic_n} \nu(I) |f_I-f_{\widehat I}|^p\log^{\lambda_1}\left(e+\frac{|f_I-f_{\widehat I}|}{e^{-\epsilon n}}\right)\\
&=\sum_{n=1}^{\infty} e^{\epsilon n \theta p} n^{\lambda_2} \sum_{I\in \dyadic_n} \nu(I) \chi(n, I)|f_I-f_{\widehat I}|^p\log^{\lambda_1}\left(e+\frac{|f_I-f_{\widehat I}|}{e^{-\epsilon n}}\right)\\
&\ \ \ \ \ \ +\sum_{n=1}^{\infty} e^{\epsilon n \theta p} n^{\lambda_2} \sum_{I\in \dyadic_n} \nu(I) (1-\chi(n, I))|f_I-f_{\widehat I}|^p\log^{\lambda_1}\left(e+\frac{|f_I-f_{\widehat I}|}{e^{-\epsilon n}}\right)\\
&=:P_1'+P_2'.
\end{align*}
If $|f_{I}- f_{\widehat I}|>e^{-\epsilon n(\theta+1)/2}$, since $\theta<1$ and $\lambda_1<0$, we have that
$$\log^{\lambda_1}\left(e+\frac{|f_I-f_{\widehat I}|}{e^{-\epsilon n}}\right)<\log^{\lambda_1}\left(e+e^{\epsilon n (1-\theta)/2}\right)\leq C n^{\lambda_1},$$
where $C=C(\epsilon, \theta, \lambda_1)$. Hence we have the estimate
$$P_1'\leq C\sum_{n=1}^{\infty} e^{\epsilon n \theta p} n^{\lambda_2+\lambda_1} \sum_{I\in \dyadic_n} \nu(I) |f_I-f_{\widehat I}|^p=C \|f\|^p_{\dlbesov}.$$
For $P_2'$, since $\log^{\lambda_1}(e+t)\leq 1$ for any $t\geq 0$ and $\sum_{I\in\dyadic_n}\nu(I)\approx 1$, we obtain that
$$P_2'\leq \sum_{n=1}^{\infty} e^{\epsilon n \theta p} n^{\lambda_2} \sum_{I\in \dyadic_n} \nu(I) e^{-\epsilon n(\theta+1)/2} =\sum_{n=1}^{\infty} e^{\epsilon n  p(\theta-1)/2} n^{\lambda_2}=C'<+\infty,$$
where $C'=C(\epsilon, \theta, \lambda_2)$. 

Next, we estimate the logarithmic term from below.  Since $f\in L^1(\bx)$ and $\lambda_1<0$, for any $I\in \dyadic_n$, it follows from $\nu(I)\approx \nu(\widehat I)\approx e^{-n\log K}$ that
\begin{align*}
\log^{\lambda_1}\left(e+\frac{|f_I-f_{\widehat I}|}{e^{-\epsilon n}}\right)\geq \log^{\lambda_1}\left(e+\frac{|f_I|+|f_{\widehat I}|}{e^{-\epsilon n}}\right) \gtrsim \log^{\lambda_1}\left(e+\frac{\|f\|_{L^1(\bx)}}{e^{-(\epsilon +\log K) n}}\right)\geq C n^{\lambda_1}, 
\end{align*}
where $C=C(\|f\|_{L^1(\bx)}, \lambda_1, \epsilon, K)$. Now we get the estimate 
\begin{align*}
\|f\|^p_{\dlbesov}&=\sum_{n=1}^{\infty} e^{\epsilon n \theta p} n^{\lambda_2+\lambda_1} \sum_{I\in \dyadic_n} \nu(I) |f_I-f_{\widehat I}|^p\\
&\leq C \sum_{n=1}^{\infty} e^{\epsilon n \theta p}n^{\lambda_2}\sum_{I\in \dyadic_n} \nu(I) |f_I-f_{\widehat I}|^p\log^{\lambda_1}\left(e+\frac{|f_I-f_{\widehat I}|}{e^{-\epsilon n}}\right)\\
&=C|f|_{\dobesov}.
\end{align*}
Therefore, we obtain the estimate
\begin{equation}\label{comparable2}
\frac 1C \|f\|^p_{\dlbesov}\leq |f|_{\dobesov}=P_1'+P_2'\leq C\|f\|^p_{\dlbesov}+C',
\end{equation}
where $C$ and $C'$ are constants depending only on $\epsilon, \theta, \lambda_1, \lambda_2$ and $\|f\|_{L^1(\bx)}$.

Combining the inequalities \eqref{comparable1} and \eqref{comparable2} which are respect to $\lambda_1>0$ and $\lambda_1<0$ with the case $\lambda_1=0$, we obtain that $\|f\|^p_{\dlbesov}<+\infty$ is equivalent to $|f|_{\dobesov}<+\infty$.
\end{proof}

We need a result from functional ananlysis.
\begin{lem}[Closed graph theorem]\label{lem3.2}
Let $X, Y$ be Banach spaces and let $T: X\rightarrow Y$ be a linear operator. Then $T$ is continuous if and only if the graph $\sum :=\{(x, T(x)): x\in X\}$ is closed in $X\times Y$ with the product topology.
\end{lem}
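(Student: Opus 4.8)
The plan is to prove the two implications of the equivalence separately; only the second one requires real work, and it is the classical argument via the Open Mapping Theorem.

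The forward direction (continuity $\Rightarrow$ closed graph) I would dispatch immediately, using only that $Y$ is Hausdorff: if $(x_n, Tx_n)$ is a sequence in the graph with $(x_n, Tx_n) \to (x, y)$ in $X \times Y$, then $x_n \to x$ in $X$ and $Tx_n \to y$ in $Y$; continuity of $T$ forces $Tx_n \to Tx$, so by uniqueness of limits $y = Tx$, and hence $(x, y) = (x, Tx)$ lies in the graph.

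For the substantive direction (closed graph $\Rightarrow$ continuity), first I would equip $X \times Y$ with the norm $\|(x,y)\| := \|x\|_X + \|y\|_Y$; since $X$ and $Y$ are complete, $X \times Y$ is a Banach space. If the graph $\Gamma := \{(x, Tx) : x \in X\}$ is closed, then it is a closed linear subspace of $X \times Y$ (linear because $T$ is linear), hence itself a Banach space. Next I would introduce the two coordinate projections restricted to $\Gamma$,
$$P_1 \colon \Gamma \to X,\quad (x, Tx) \mapsto x, \qquad P_2 \colon \Gamma \to Y,\quad (x, Tx) \mapsto Tx,$$
both bounded and linear because $\|P_i(x, Tx)\| \le \|(x, Tx)\|$. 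The key point is that $P_1$ is a bijection of $\Gamma$ onto $X$: it is surjective since $T$ is defined on all of $X$, and injective since the first coordinate of a pair in $\Gamma$ determines the pair. Applying the Open Mapping Theorem (equivalently, the Bounded Inverse Theorem) to $P_1$ then yields that $P_1^{-1} \colon X \to \Gamma$, $x \mapsto (x, Tx)$, is bounded, and therefore $T = P_2 \circ P_1^{-1}$ is a composition of bounded maps, hence bounded, i.e. continuous.

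I do not expect a genuine obstacle here: once the Open Mapping Theorem is available the argument is routine. The one thing to be careful about is completeness, which enters twice — first to ensure the closed subspace $\Gamma \subseteq X \times Y$ is complete, and then to apply the Open Mapping Theorem to $P_1 \colon \Gamma \to X$ — so the hypothesis that both $X$ and $Y$ are Banach is essential and cannot be weakened. Since the statement is classical, an acceptable alternative is simply to cite a standard functional-analysis reference.
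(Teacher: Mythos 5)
Your proof is correct and is the canonical argument: the easy direction uses only that $Y$ is Hausdorff, and the substantive direction realizes $T$ as $P_2\circ P_1^{-1}$ after applying the Bounded Inverse Theorem to the first coordinate projection on the (complete, because closed) graph. The paper itself offers no proof of this lemma---it is stated as a standard fact from functional analysis and simply used in Corollary 3.3---so there is nothing to compare against; citing a standard reference, as you suggest at the end, is exactly what the paper implicitly does, and your self-contained argument is a fine substitute.
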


 Let $L^{\Phi}(\bx)\cap \dlbesov$ be the Banach space equipped with the norm
\[\|f\|_{L^{\Phi}(\bx)\cap \dlbesov} := \|f\|_{L^{\Phi}(\bx)}+\|f\|_{\dlbesov}.\]
Using the same manner, we could define the space $X\cap Y$ for any two spaces $X$ and $Y$.

\begin{cor}\label{cor3.3}
Let $\lambda, \lambda_1,\lambda_2$ and $\Phi$ be as in Lemma \ref{lem3.1}.
Then we have
\[ L^{\Phi}(\bx)\cap \dlbesov=\obesov\]
with equivalent norms.
\end{cor}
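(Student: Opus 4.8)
The plan is to first verify that $\obesov$ and $L^{\Phi}(\bx)\cap \dlbesov$ consist of exactly the same functions, and then to upgrade this set identity to an equivalence of norms by means of the closed graph theorem (Lemma \ref{lem3.2}).

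\emph{Step 1: the underlying vector spaces coincide.} Since $\nu(\bx)<\infty$ and $\Phi$ grows superlinearly at infinity, $L^{\Phi}(\bx)\subseteq L^{1}(\bx)$, so Lemma \ref{lem3.1} applies to every $f\in L^{\Phi}(\bx)$ and tells us that $\|f\|_{\dlbesov}<\infty$ is equivalent to $|f|_{\dobesov}<\infty$. I would then use the $\Delta_2$-condition for $\Phi$, which gives $|f/k|_{\dobesov}\approx |f|_{\dobesov}$ for each fixed $k>0$, so that the Luxemburg-type quantity $\inf\{k>0: |f/k|_{\dobesov}\le 1\}$ is finite exactly when $|f|_{\dobesov}<\infty$. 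Combining these, $f\in\obesov$ if and only if $f\in L^{\Phi}(\bx)$ and $\|f\|_{\dlbesov}<\infty$; and the $L^{p}$-integrability built into Definition \ref{besov-3} is automatic here by the standard Besov--Poincar\'e estimate $\|f\|_{L^{p}(\bx)}\lesssim \|f\|_{L^{1}(\bx)}+\|f\|_{\dlbesov}$. Hence $\obesov=L^{\Phi}(\bx)\cap \dlbesov$ as sets, and both carry $\|\cdot\|_{L^{\Phi}(\bx)}$ as a summand of their norm.

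\emph{Step 2: both norms are complete.} The space $L^{\Phi}(\bx)\cap \dlbesov$ is Banach, being the intersection, normed by the sum, of the two Banach spaces $L^{\Phi}(\bx)$ and $\dlbesov$, both of which embed continuously into the Hausdorff topological vector space $L^{1}(\bx)$. For $\obesov$ I would argue directly: a $\|\cdot\|_{\obesov}$-Cauchy sequence $(f_m)$ converges in $L^{\Phi}(\bx)$, hence in $L^{1}(\bx)$, to some $f$, so that $(f_m)_I\to f_I$ for every $I\in\dyadic$; applying Fatou's lemma in the index $m'$ to the countable nonnegative series defining $|\cdot|_{\dobesov}$ then yields $|(f_m-f)/k|_{\dobesov}\le 1$ for large $m$ whenever $|(f_m-f_{m'})/k|_{\dobesov}\le 1$ for large $m,m'$, and hence $f\in\obesov$ with $f_m\to f$ in $\obesov$. (Alternatively, completeness of $\obesov$ may be quoted from the general theory of Orlicz-type spaces.)

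\emph{Step 3: equivalence of the norms and the main obstacle.} Writing $\mathcal V$ for the common vector space, which by Steps 1--2 is a Banach space under each of the two norms, I would apply Lemma \ref{lem3.2} to the identity map $T=\Id\colon (\mathcal V,\|\cdot\|_{L^{\Phi}(\bx)\cap \dlbesov})\to(\mathcal V,\|\cdot\|_{\obesov})$: its graph $\{(f,f):f\in\mathcal V\}$ is closed because, if $f_m\to f$ in the first norm and $f_m\to g$ in the second, both norms dominate $\|\cdot\|_{L^{\Phi}(\bx)}$, so $f_m\to f$ and $f_m\to g$ in $L^{\Phi}(\bx)$ and thus $f=g$. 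Lemma \ref{lem3.2} then gives that $T$ is continuous, i.e. $\|f\|_{\obesov}\lesssim\|f\|_{L^{\Phi}(\bx)\cap \dlbesov}$, and the same reasoning applied to $T^{-1}$ gives the reverse inequality, which is the claim. The substantive inputs are Lemma \ref{lem3.1} (already proved) and the completeness of $\obesov$; once both spaces are known to be Banach the closed graph theorem converts the set identity into the norm equivalence with no further estimates, so the step requiring the most care is verifying that the Luxemburg-type functional of Definition \ref{besov-2} defines a complete, and not merely normed, space, together with having the Besov--Poincar\'e inequality at hand so that the two definitions genuinely describe the same set of functions.
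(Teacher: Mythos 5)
Your proposal is correct and follows essentially the same route as the paper: Lemma \ref{lem3.1} gives the coincidence of the underlying vector spaces, and the closed graph theorem applied to the identity map (with closedness of the graph checked via uniqueness of limits in $L^{\Phi}(\bx)$) upgrades this to equivalence of norms. The only difference is that you explicitly verify the completeness of both spaces and the finiteness equivalence for the Luxemburg-type functional, details the paper leaves implicit.
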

\begin{proof}
It directly follows from Lemma \ref{lem3.1} that $L^{\Phi}(\bx)\cap \dlbesov$ and $\obesov$ are the same vector spaces. Next we use Lemma \ref{lem3.2} (Closed graph theorem) to show that they are the same Banach spaces with  equivalent norms.

Consider the identity map $\Id:  L^{\Phi}(\bx)\cap \dlbesov\rightarrow \obesov$, i.e., $\Id(x)=x$ for any $x\in  L^{\Phi}(\bx)\cap \dlbesov$. Then the graph of $\Id$ is closed. Indeed, if $(x_n, x_n)$ is a sequence in this graph that converges to $(x, y)$ in $(L^{\Phi}(\bx)\cap \dlbesov)\times (L^p(\bx)\cap \dobesov)$ with product topology, then $x_n$ converges to $x$ in $\|\cdot\|_{L^{\Phi}(\bx)\cap \dlbesov}$ norm and hence in $L^{\Phi}(\bx)$. In the same manner, $x_n$ converges to $y$ in $\|\cdot\|_{\obesov}$ and hence in $L^{\Phi}(\bx)$. But the limits are unique in $L^{\Phi}(\bx)$, so $x=y$. 

Applying Lemma \ref{lem3.2} (Closed graph theorem), we see that the map $\Id$ is continuous from $L^{\Phi}(\bx)\cap \dlbesov$ to $\obesov$; similarly for  the inverse. Thus the norms $\|\cdot\|_{L^{\Phi}(\bx)\cap \dlbesov}$ and $\|\cdot\|_{\obesov}$ are equivalent and the claim follows.
\end{proof}

There is a slightly difference between the results in Corollary \ref{cor3.3} and Proposition \ref{prop1.2}, since   $\lbesov=L^p(\bx)\cap \dlbesov$. To get Proposition \ref{prop1.2} from Corollary \ref{cor3.3}, we need some estimates between the $L^p$-norm and $L^\Phi$-norm. Since $\nu(\bx)=1$, we have the following lemma, see \cite [Theorem 3.17.1 and Theorem 3.17.5]{KJF}.

\begin{lem}
Let $\Phi_1, \Phi_2$ be two Young functions. 
If  $\Phi_2\prec\Phi_1$,  then
$$\|u\|_{L^{\Phi_2}(\bx)}\lesssim \|u\|_{L^{\Phi_1}(\bx)}$$
for all $u\in L^{\Phi_1}(\bx)$.
\end{lem}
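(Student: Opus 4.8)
The plan is to work directly with the Luxemburg norms; the statement is exactly \cite[Theorems 3.17.1 and 3.17.5]{KJF}, but a short self-contained argument runs as follows. Fix $u\in L^{\Phi_1}(\bx)$ and put $s:=\|u\|_{L^{\Phi_1}(\bx)}$, which we may assume satisfies $0<s<\infty$ (if $s=0$ then $u=0$ $\nu$-a.e.\ and the inequality is trivial). First I would record the elementary observation that $\int_{\bx}\Phi_1(|u|/\mu)\,d\nu\leq 1$ for every $\mu>s$: since $s$ is the infimum of the set of such $\mu$, that set contains some $\mu_0<\mu$, and monotonicity of $\Phi_1$ gives $\int_{\bx}\Phi_1(|u|/\mu)\,d\nu\leq\int_{\bx}\Phi_1(|u|/\mu_0)\,d\nu\leq 1$. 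Fixing $\epsilon>0$ and writing $v:=|u|/(s+\epsilon)$, so that $\int_{\bx}\Phi_1(v)\,d\nu\leq 1$, the task is thus reduced to producing a constant $M=M(\Phi_1,\Phi_2)$ independent of $u$ and $\epsilon$ with $\int_{\bx}\Phi_2(v/M)\,d\nu\leq 1$, since this yields $\|u\|_{L^{\Phi_2}(\bx)}\leq M(s+\epsilon)$ and then letting $\epsilon\to 0^+$ finishes the proof.

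Next I would bring in the hypothesis $\Phi_2\prec\Phi_1$, which supplies constants $k>0$ and $C\geq 0$ with $\Phi_2(t)\leq\Phi_1(kt)$ for all $t\geq C$. Set $M_0:=2k$ and split $\bx=A\cup B$ with $A:=\{v\geq 2kC\}$ and $B:=\{v<2kC\}$. On $A$ we have $v/M_0\geq C$, so $\Phi_2(v/M_0)\leq\Phi_1(kv/M_0)=\Phi_1(v/2)\leq\frac12\Phi_1(v)$, where the last step uses convexity of $\Phi_1$ together with $\Phi_1(0)=0$; hence $\int_A\Phi_2(v/M_0)\,d\nu\leq\frac12\int_{\bx}\Phi_1(v)\,d\nu\leq\frac12$. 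On $B$, monotonicity of $\Phi_2$ and $\nu(\bx)=1$ give $\int_B\Phi_2(v/M_0)\,d\nu\leq\Phi_2(C)\,\nu(\bx)=\Phi_2(C)$. Therefore
$$\int_{\bx}\Phi_2\!\left(\frac{v}{M_0}\right)d\nu\leq A_0:=\frac12+\Phi_2(C),$$
a finite constant depending only on $\Phi_1$ and $\Phi_2$.

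Finally I would absorb the constant $A_0$ by one further rescaling: by convexity of $\Phi_2$ and $\Phi_2(0)=0$ one has $\Phi_2(t/N)\leq\frac1N\Phi_2(t)$ for every real $N\geq 1$. Choosing $N:=\max\{1,A_0\}$ and $M:=N M_0=2kN$, we get
$$\int_{\bx}\Phi_2\!\left(\frac{v}{M}\right)d\nu=\int_{\bx}\Phi_2\!\left(\frac1N\cdot\frac{v}{M_0}\right)d\nu\leq\frac1N\int_{\bx}\Phi_2\!\left(\frac{v}{M_0}\right)d\nu\leq\frac{A_0}{N}\leq 1,$$
so $\|u\|_{L^{\Phi_2}(\bx)}\leq M(s+\epsilon)$, and letting $\epsilon\to 0^+$ yields $\|u\|_{L^{\Phi_2}(\bx)}\leq M\|u\|_{L^{\Phi_1}(\bx)}$ with $M=2kN$ depending only on $\Phi_1,\Phi_2$ and on $\nu(\bx)=1$. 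This is the asserted estimate.

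None of these steps is a genuine obstacle — the whole argument is routine Orlicz-space bookkeeping, and the shortest route is simply to invoke \cite{KJF}. The only point that needs a little care is the splitting in the second step: the comparison $\Phi_2\prec\Phi_1$ only controls large arguments, so the contribution of the region where $v$ is small must be bounded separately, and that is precisely where finiteness of the measure (here $\nu(\bx)=1$) is used in an essential way.
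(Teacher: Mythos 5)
Your proof is correct. Note that the paper itself offers no argument for this lemma at all --- it simply points to \cite[Theorems 3.17.1 and 3.17.5]{KJF} --- so your self-contained derivation is a genuine addition rather than a variant of the paper's reasoning. The argument is the standard one and every step checks out: the observation that $\int_{\bx}\Phi_1(|u|/\mu)\,d\nu\leq 1$ for every $\mu>\|u\|_{L^{\Phi_1}(\bx)}$ is valid because the admissible set in the Luxemburg infimum is upward closed; the splitting into $\{v\geq 2kC\}$ and $\{v<2kC\}$ correctly isolates the only place where finiteness of $\nu(\bx)$ enters (the comparison $\Phi_2(t)\leq\Phi_1(kt)$ is assumed only for $t\geq C$, so the small-argument region must be handled by the crude bound $\Phi_2(v/M_0)\leq\Phi_2(C)$ times the total mass); and both rescaling steps, $\Phi_1(v/2)\leq\tfrac12\Phi_1(v)$ and $\Phi_2(t/N)\leq\tfrac1N\Phi_2(t)$, are legitimate uses of convexity together with $\Phi_i(0)=0$. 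The degenerate cases ($s=0$, or $C=0$ so that $\Phi_2(C)=0$) are also covered. The one thing worth making explicit, which you use tacitly, is that $u\in L^{\Phi_1}(\bx)$ in the paper's sense ($\int\Phi_1(\alpha|u|)\,d\nu<\infty$ for some $\alpha>0$) does imply $s<\infty$; this again follows from convexity, since $\int\Phi_1(|u|/(N\alpha^{-1}))\,d\nu\leq N^{-1}\int\Phi_1(\alpha|u|)\,d\nu\leq 1$ for $N$ large. With that remark your argument is complete and yields the constant $M=2k\max\{1,\tfrac12+\Phi_2(C)\}$ explicitly, which the bare citation does not.
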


By the relation \eqref{eq-add}, for any $\delta>0$,  we have 
\begin{equation}\label{eq-add1}
\|u\|_{L^{\max\{p-\delta, 1\}}(\bx)}\lesssim \|u\|_{L^{\Phi}(\bx)}\lesssim \|u\|_{L^{p+\delta}(\bx)}
\end{equation}
  for all $u\in L^{p+\delta}(\bx)$.

Recall that $\nu(\bx)=1$ and $\diam(\bx)\approx 1$. Since $\bx$ is Ahlfors $Q$-regular where $Q=\frac{\log K}{\epsilon}$, we obtain the following lemma immediately from \cite[Theorem 4.2]{K19}
\begin{lem}\label{lemma3.6}
Let $0<s<1$ and $p\geq 1$. Let $u\in \dot N^{s}_{p,p}(\bx)$. If $0<sp<Q=\frac{\log K}{\epsilon}$, then $u\in L^{p^*}(\bx)$, $p^*=\frac{Qp}{Q-sp}$ and 
\[\inf_{c\in \mathbb R}\left(\dashint_{\bx}|u-c|^{p^*}\, d\nu\right)^{1/{p^*}}\lesssim \|u\|_{\dot N^{s}_{p, p}(\bx)}\]
\end{lem}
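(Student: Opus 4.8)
The plan is to obtain this as an immediate consequence of the Sobolev-type embedding for Haj\l asz--Besov spaces on bounded Ahlfors regular metric measure spaces, namely \cite[Theorem 4.2]{K19}; essentially all that is needed is to verify that $\bx$ fits the hypotheses of that theorem and to match up the normalizations. First I would recall the structural facts already collected in the preliminaries: by Proposition \ref{Ahlfor-boundary} the boundary $\bx$ is Ahlfors $Q$-regular with $Q=\log K/\epsilon$, so in particular $\nu$ is doubling; moreover $\nu(\bx)=1$ and $\diam \bx = 2/\epsilon\approx 1$, so $\bx$ is a \emph{bounded} Ahlfors $Q$-regular space. The standing hypothesis $0<sp<Q$ puts us in the subcritical (Sobolev) regime of that embedding theorem.

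Next I would quote \cite[Theorem 4.2]{K19} in this regime: for such a space, if $u$ has a fractional $s$-Haj\l asz gradient $\vec g=\{g_k\}_{k\in\mathbb Z}\in\mathbb D^s(u)$ with $(\|g_k\|_{L^p(\bx)})_{k}\in\ell^p$, then $u$ satisfies a Sobolev--Poincar\'e inequality with target exponent $p^*=Qp/(Q-sp)$: there is a constant $c$ (one may take $c=\dashint_{\bx}u\,d\nu$) with
\[
\left(\dashint_{\bx}|u-c|^{p^*}\,d\nu\right)^{1/p^*}\lesssim \bigl\|(\|g_k\|_{L^p(\bx)})_{k\in\mathbb Z}\bigr\|_{\ell^p},
\]
the implied constant depending only on $s,p,Q$ and the Ahlfors regularity constants of $\bx$. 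Taking the infimum over all $\vec g\in\mathbb D^s(u)$ on the right converts the right-hand side into $\|u\|_{\dot N^s_{p,p}(\bx)}$ by Definition \ref{H-besov}(ii), and passing to the infimum over $c\in\real$ on the left only decreases the quantity, giving exactly the displayed estimate. Finiteness of the left side for this $c$ shows $u-c\in L^{p^*}(\bx)$, and since $\nu(\bx)=1<\infty$ the constant $c$ itself lies in $L^{p^*}(\bx)$; hence $u\in L^{p^*}(\bx)$.

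The only genuine point of care — not really an obstacle — is bookkeeping at the level of definitions: one must confirm that the notion of fractional $s$-Haj\l asz gradient in \cite{K19} coincides with the one in Definition \ref{H-besov} (the discretization $2^{-k-1}\le d_X(\zeta,\xi)<2^{-k}$ versus comparable radii introduces only the usual harmless constants, absorbed into $\lesssim$), and that the finite-measure / bounded-space hypotheses of \cite[Theorem 4.2]{K19} are met, which holds here precisely because $\nu(\bx)=1$ and $\diam\bx\approx 1$. If one prefers, one may instead first invoke Proposition \ref{norm-equiv} to replace $\dot N^s_{p,p}(\bx)$ by $\dbesove_{p}$ or $\dBesove$ and apply the embedding in whichever of these equivalent formulations \cite{K19} is stated. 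No additional analysis specific to the tree $X$ is required.
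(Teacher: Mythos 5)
Your proposal is correct and follows essentially the same route as the paper, which likewise notes that $\bx$ is a bounded Ahlfors $Q$-regular space with $\nu(\bx)=1$ and $\diam\bx\approx 1$ and then deduces the lemma immediately from \cite[Theorem 4.2]{K19}. Your write-up is in fact more careful than the paper's, which omits the bookkeeping about matching definitions and passing to the infima.
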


\begin{proof}[Proof of Proposition \ref{prop1.2}]
Let $s=\min\{\frac{\theta}{2}, \frac{Q}{2p}\}$, where $Q=\frac{\log K}{\epsilon}$. Let $p^*=\frac{Qp}{Q-sp}$ and $\delta=p^*-p$. It follows from the definitions of our Besov-type spaces and Proposition \ref{norm-equiv} that
$$\dlbesov\subset \dot {\mathcal B}^{s}_{p}(\bx)=\dot N^{s}_{p, p}(\bx).$$
By Lemma \ref{lemma3.6} and triangle inequality, we obtain that
\begin{align*}
\left(\dashint_{\bx}|u-u_{\bx}|^{p^*}\, d\nu\right)^{1/{p^*}}&\leq 2 \inf_{c\in \mathbb R}\left(\dashint_{\bx}|u-c|^{p^*}\, d\nu\right)^{1/{p^*}}\\
&\lesssim \|u\|_{\dot N^{s}_{p, p}(\bx)}\lesssim \|u\|_{\dlbesov},
\end{align*}
for any $u\in \dlbesov$, where $u_{\bx}=\dashint_{\bx} u\, d\nu$. Since $|u|\leq |u-u_{\bx}|+|u_{\bx}|$ and $\nu(\bx)=1$, it follows from the Minkowski inequality that
\begin{align*}
\|u\|_{L^{p^*}(\bx)}&\leq \|u-u_{\bx}\|_{L^{p^*}(\bx)}+ \|u_{\bx}\|_{L^{p^*}(\bx)}\\
&=\left(\dashint_{\bx}|u-u_{\bx}|^{p^*}\, d\nu\right)^{1/{p^*}}+\left|\dashint_{\bx} u\, d\nu\right|\\
&\lesssim \|u\|_{L^1(\bx)}+ \|u\|_{\dlbesov},
\end{align*}
for any $u\in \dlbesov$. Since $\|\cdot\|_{L^1(\bx)}\leq \|\cdot\|_{L^p(\bx)}\leq \|\cdot\|_{L^{p^*}(\bx)}$ is trivial, we have that
\[L^1(\bx) \cap \dlbesov=\lbesov= L^{p^*}(\bx)\cap \dlbesov.\] 
Recall the relation \eqref{eq-add1} and $\delta=p^*-p$.  Hence we have that
\[\|\cdot\|_{L^1}(\bx) \lesssim \|\cdot\|_{L^{\Phi}(\bx)}\lesssim \|\cdot\|_{L^{p^*}(\bx)}.\]
Thus,
\[\lbesov= L^\Phi(\bx)\cap \dlbesov.\]
Combining with Corollary \ref{cor3.3}, i.e., 
\[L^\Phi(\bx)\cap \dlbesov=L^\Phi(\bx)\cap \dobesov=\obesov,\]
we finally arrive at
\[\lbesov=\obesov.\]
\end{proof}

\bigskip

\noindent{\bf Acknowledgement.} The author would like to thank his advisor Professor Pekka Koskela for helpful comments and suggestions.


\medskip
\noindent Zhuang Wang

\noindent
Department of Mathematics and Statistics, University of Jyv\"askyl\"a, PO~Box~35, FI-40014 Jyv\"askyl\"a, Finland

\noindent{\it E-mail address}:  \texttt{zhuang.z.wang@jyu.fi}

\end{document}